\DeclareMathOperator\CC{{\it\mathbb C^n}}
\DeclareMathOperator\tiu{\it{\tilde{u}}}
\DeclareMathOperator\la{{\lambda}}
\DeclareMathOperator\ep{{\varepsilon}}
\DeclareMathOperator \lbr {{\it\lbrace}}
\DeclareMathOperator \rbr {{\it\rbrace}}
\DeclareMathOperator\pa{\partial}
\def\al{\alpha}
\newtheorem{theorem}{Theorem}
\newtheorem{definition}[theorem]{Definition}
\newtheorem{corollary}[theorem]{Corollary}
\newtheorem{proposition}[theorem]{Proposition}
\newtheorem{example}[theorem]{Example}
\newtheorem{remark}[theorem]{Remark}
\title{The minimum sets and free boundaries of strictly plurisubharmonic functions}
\subjclass[2000]{32W20}
\keywords{complex Monge-Amp\`ere operator, free boundary, Hausdorff dimension}
\author{S\l awomir Dinew}
\address{S\l awomir Dinew\\
Department of Mathematics and Computer Science\\
Jagiellonian University, Poland}\email{slawomir.dinew@im.uj.edu.pl}
\author{\.Zywomir  Dinew}
\address{\.Zywomir Dinew\\
Department of Mathematics and Computer Science\\
Jagiellonian University, Poland} \email{zywomir.dinew@im.uj.edu.pl}
\begin{document}
 
\begin{abstract}
We study the minimum sets of plurisubharmonic functions with strictly positive Monge-Amp\`ere densities. We investigate the relationship between their Hausdorff dimension and the regularity of the function. Under suitable assumptions we prove that the minimum set cannot contain analytic subvarieties of large dimension. In the planar case we analyze the influence on the regularity of the right hand side and consider the corresponding free boundary problem with irregular data. We provide sharp examples for the Hausdorff dimension of the minimum set and the related free boundary. We also draw several analogues with the corresponding real results.
\end{abstract}
\maketitle
\section{Introduction}

A classical theorem of Harvey and Wells \cite{HW73} states that the zero set of a nonnegative strictly plurisubharmonic and smooth function is contained in a $\mathcal C^1$ totally real hypersurface. In particular this implies that the Hausdorff dimension of the zero set is {\it small} compared to the dimension of the ambient space, and the zero set has {\it no analytic structure}.

There are many good reasons to study such minimum sets. One of them is that compact pieces of such satisfy the Condition  (P) introduced by Catlin in \cite{Cat84} which is crucial for the compactness of the $\overline{\partial}$-Neumann problem. In a completely different direction El Mir \cite{EM84} has shown that zero sets of bounded continuous strictly plurisubharmonic functions are removable sets in the theory of extensions of closed positive currents. In both settings it is crucial that the function is strictly plurisubharmonic.

 Our motivation for the investigation of generalizations of such minimum sets comes from the study of compactness properties of
 solutions to the complex
 Monge-Amp\`ere
 equation. Analogous theory for the real Monge-Amp\`ere equation was developed by Caffarelli \cite{Caf89,Caf90b} and the analysis of the
 corresponding minimal sets is crucial there. 


The real counterpart of the theory, with plurisubharmonic functions replaced by {\it convex} ones, is trivial for the minimum set of a smooth strictly convex function is always a singleton. If strict convexity is replaced simply by convexity the zero set can be any preassigned convex set. On the other hand, when strict convexity is relaxed to strict positivity or the real Monge-Amp\`ere operator, the picture is drastically different. In fact understanding how a convex solution to a Monge-Amp\`ere equation with strictly positive right hand side {\it may fail to be strictly convex} is the heart of the matter of the Caffarelli regularity theory  (see \cite{Caf90b,Gut01}). As classical examples of Pogorelov \cite{Pog71}  (see also Example \ref{generalizedpogorelov} below) show, the minimum set in this case can be a line or a lower dimensional piece of linear subspace. Its Hausdorff dimension can be estimated  (\cite{Caf93, Mo15}), and, as we shall see below, it is strictly related to the {\it regularity} of the function itself. Our first observation is as follows:
\begin{proposition} Let $v$ be a nonnegative convex function in a domain $\Omega$,\ $\left (\Omega\Subset\mathbb R^n\right)$ satisfying $Det\left (D^2v\right)\geq C>0$  (the inequality is to be understood in the
 viscosity sense). Assume moreover that $v\in\mathcal C^{1,\alpha}$ for $\alpha>1-\frac{2k}n$. Then one has the Hausdorff dimension estimate
$$dim_{\mathcal H}\lbr v^{-1}\left (0\right)\rbr< k.$$
 
\end{proposition}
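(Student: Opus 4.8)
The plan is to argue by contradiction, playing a lower bound on the Alexandrov (Monge--Amp\`ere) measure of $v$ coming from the equation against an upper bound coming from $\mathcal C^{1,\alpha}$ regularity and convexity. Since $v\geq 0$ is convex, its zero set is convex on every convex subset of $\Omega$, and the Hausdorff dimension of a convex set equals the dimension of its affine hull; hence if $\dim_{\mathcal H}\{v=0\}\geq k$ then, inside some ball $B(x_0,\delta_0)\Subset\Omega$, the set $\{v=0\}$ contains a relatively open piece $B$ of a $k$-dimensional affine subspace $L$. Both hypotheses being invariant (up to constants) under affine changes of variable, we may assume $L=\mathbb R^k\times\{0\}$ and that $B$ is a $k$-ball centered at the origin. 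Finally, since $v$ is nonnegative and $\mathcal C^1$ and vanishes on $B$, every point of $B$ is an interior minimum, so $\nabla v\equiv 0$ on $B$.

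Fix a thin box $Q=Q_{r,s}:=B^k_r\times B^{n-k}_s$ with $2r$ smaller than the radius of $B$ and with $s>0$ small, and compare two estimates for $\mu_v(Q)=|\nabla v(Q)|$ (the last identity holds because $v\in\mathcal C^1$, so the subdifferential is single-valued). For the lower bound, the passage from the viscosity inequality $Det(D^2v)\geq C$ to the Alexandrov measure is classical for convex functions (see \cite{Gut01}) and gives
\[
\mu_v(Q)\ \geq\ C\,|Q|\ =\ C'\,r^{k}\,s^{\,n-k}.
\]
For the upper bound we first note, using $v(x',0)=0$, $\nabla v(x',0)=0$ and the H\"older bound on $\nabla v$, that a one-dimensional Taylor expansion in the normal directions yields $0\leq v\leq C\,s^{1+\alpha}$ on the doubled box $2Q$, hence $\operatorname{osc}_{2Q}v\leq C\,s^{1+\alpha}$. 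Next, for $y\in Q$ and $p=\nabla v(y)$, split $p=(p',p'')$ into tangential and normal parts and test the subgradient inequality at the boundary point $y+\big(r\,p'/|p'|,\ s\,p''/|p''|\big)\in 2Q$; this gives
\[
r\,|p'|+s\,|p''|\ \leq\ \operatorname{osc}_{2Q}v\ \leq\ C\,s^{1+\alpha},
\]
so that $\nabla v(Q)$ lies in the box $\{|p'|\leq C s^{1+\alpha}/r\}\times\{|p''|\leq C s^{\alpha}\}$, whence $|\nabla v(Q)|\leq C''\,r^{-k}\,s^{\,k+n\alpha}$.

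Combining the two estimates yields $C'\,r^{k}s^{\,n-k}\leq C''\,r^{-k}s^{\,k+n\alpha}$, that is
\[
r^{\,2k}\ \leq\ \frac{C''}{C'}\;s^{\,2k-n+n\alpha}.
\]
The hypothesis $\alpha>1-\tfrac{2k}{n}$ is exactly the statement that the exponent $2k-n+n\alpha$ is positive; keeping $r$ fixed and letting $s\to 0$ then forces $r=0$, a contradiction. Therefore $\dim_{\mathcal H}\{v=0\}<k$.

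I expect the crux --- and the reason the sharp threshold is $1-\tfrac{2k}{n}$ rather than $1-\tfrac{k}{n}$ --- to be the upper bound on $|\nabla v(Q)|$: one must bound the \emph{oscillation} of $v$, not the size of $\nabla v$ directly (the latter is only $O(s^{\alpha})$), thereby gaining an extra factor $s$, and then distribute it anisotropically among the $k$ tangential and the $n-k$ normal directions via the balanced inequality $r|p'|+s|p''|\lesssim\operatorname{osc}_{2Q}v$. The remaining ingredients --- the equivalence between the viscosity inequality and $\mu_v\geq C\,dx$, the identity $\mu_v(E)=|\nabla v(E)|$ for $\mathcal C^1$ convex functions, and the convex-geometric reduction to the case where $\{v=0\}$ literally contains a $k$-ball --- are standard but should be stated with care.
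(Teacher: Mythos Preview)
Your argument is correct and yields the same sharp threshold $\alpha>1-\tfrac{2k}{n}$, but the route differs from the paper's. The paper invokes Mooney's section bound (Proposition~\ref{sections}): from $\det D^2v\geq C$ one has $|S_h|\leq C h^{n/2}$, while the regularity estimate $v(x)\leq C\,\mathrm{dist}(x,L)^{1+\alpha}$ forces $S_h$ to contain a slab of perpendicular width $\gtrsim h^{1/(1+\alpha)}$ and fixed tangential size, so $|S_h|\gtrsim h^{(n-k)/(1+\alpha)}$; comparing exponents gives $\alpha\leq 1-\tfrac{2k}{n}$. You instead bypass the section machinery (and John's lemma behind it) by bounding the Monge--Amp\`ere measure of a thin box $Q_{r,s}$ from above and below: the lower bound $\mu_v(Q)\geq C|Q|$ comes directly from the viscosity--Alexandrov equivalence, and your upper bound $|\nabla v(Q)|\lesssim r^{-k}s^{k+n\alpha}$ via the anisotropic subgradient test $r|p'|+s|p''|\leq \mathrm{osc}_{2Q}v\lesssim s^{1+\alpha}$ is the genuinely new ingredient. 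Your approach is slightly more elementary and self-contained (no normalisation of sections), and it makes transparent \emph{where} the extra factor of $s$ is gained --- in the oscillation rather than the gradient --- which the paper's section argument hides inside the volume comparison. The paper's route, on the other hand, plugs directly into the existing Caffarelli--Mooney framework, which is convenient if one already has those tools available.
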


Returning to the complex realm, if strict plurisubharmonicity is exchanged to mere plurisubharmonicity, then there is almost no control of the minimum set. In fact every regular compact set $K$ in $\mathbb C^n$ (see \cite{Kli91} for a definition) is the zero set of the  (nonnegative) global extremal plurisubharmonic function associated to $K$. It is nevertheless interesting to consider the intermediate condition: we investigate nonnegative plurisubharmonic functions for which the complex Monge-Amp\`ere operator is {\it strictly positive}.

As observed by B\l ocki \cite{Bl99}, Pogorelov examples from \cite{Pog71} easily generalize to the complex setting  (the important difference being that, unlike the real case, there is no difference between complex dimension $2$ and higher dimensions). Thus our imposed condition cannot rule out a complex analytic structure within the zero set. It is however reasonable to ask whether one can control its dimension just like the dimension of the affine set in the real case.

Our next result confirms this expectation:
\begin{theorem}
 Let $u\geq 0$ be a plurisubharmonic function in a domain $\Omega$,\ in $\mathbb C^n$, satisfying $\left (dd^c u\right)^n\geq 1$. If
 additionally $u\in \mathcal C^{1,\alpha}$ for
 $\alpha>1-\frac{2k}n$ if $2k\leq n$ or $u\in \mathcal C^{0,\beta}$ for $\beta> 2-\frac{2k}n$ if $2k>n$, then no
 analytic set of dimension $\geq k$ can be contained in $u^{-1}\left (0\right)$.
\end{theorem}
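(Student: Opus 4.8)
The plan is to argue by contradiction: flatten the alleged analytic set, convert the $\mathcal C^{1,\alpha}$ (resp. $\mathcal C^{0,\beta}$) regularity into quantitative decay of $u$ transverse to that set, and show this decay is incompatible with $\left(dd^cu\right)^n\geq1$ via a Chern--Levine--Nirenberg estimate adapted to the product structure. Suppose then that $V\subset u^{-1}(0)$ is an analytic set of dimension $k'\geq k$. Since $1-\tfrac{2k'}{n}\leq 1-\tfrac{2k}{n}<\alpha$ (resp. $2-\tfrac{2k'}{n}\leq 2-\tfrac{2k}{n}<\beta$), it suffices to treat $k'=k$. Passing to a neighbourhood of a regular point of $V$ and composing with a local biholomorphism --- which keeps $u$ nonnegative and plurisubharmonic, preserves its Hölder class, and turns $\left(dd^cu\right)^n\geq1$ into $\left(dd^cu\right)^n\geq c_0>0$ on relatively compact sets --- we may assume $V=\{z'=0\}$ in coordinates $z=(z',z'')\in\mathbb C^{n-k}\times\mathbb C^{k}$ on a polydisc $P=\{|z'|<2R\}\times\{|z''|<2R\}$, with $u\geq0$ on $P$ and $u\equiv0$ on $\{z'=0\}$. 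In the $\mathcal C^{1,\alpha}$ case the vanishing of $u$ on $\{z'=0\}$ together with $u\geq0$ forces $\nabla u\equiv0$ there, and integrating the $\alpha$-Hölder field $\nabla u$ along the segment from $(0,z'')$ to $(z',z'')$ yields $0\leq u(z',z'')\leq C_0|z'|^{1+\alpha}$; in the $\mathcal C^{0,\beta}$ case the same computation with $u$ in place of $\nabla u$ gives $0\leq u(z',z'')\leq C_0|z'|^{\beta}$.

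The heart of the proof is the following \emph{slab estimate}: with $\beta'=\tfrac i2\sum_{j\leq n-k}dz'_j\wedge d\bar z'_j$, $\beta''=\tfrac i2\sum_{j\leq k}dz''_j\wedge d\bar z''_j$ and $\Sigma_\rho=\{|z'|<\rho\}\times\{|z''|<R\}$, for all small $\rho$ one has
$$\int_{\Sigma_\rho}\left(dd^cu\right)^{n}\leq C(n,R)\Bigl(\sup_{\Sigma_{2\rho}}u\Bigr)^{n}.$$
To prove it, take cutoffs $\chi(z)=\chi'(z'/\rho)\chi''(z''/R)$; a direct estimate (absorbing the mixed second derivatives by Cauchy--Schwarz with weights $\rho^{\pm1}$) gives $\pm dd^c\chi\leq C(\rho^{-2}\beta'+\beta'')$ as real $(1,1)$-forms. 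Moving one $dd^c$ at a time off $u$ onto nested cutoffs supported in $\Sigma_{2\rho}$ (using that $\beta',\beta''$ are closed and $u\geq0$), one obtains after $n$ steps
$$\int_{\Sigma_\rho}\left(dd^cu\right)^{n}\leq C^{n}\Bigl(\sup_{\Sigma_{2\rho}}u\Bigr)^{n}\int_{\Sigma_{2\rho}}\left(\rho^{-2}\beta'+\beta''\right)^{n}.$$
Expanding $\left(\rho^{-2}\beta'+\beta''\right)^{n}$ by the binomial theorem and using $(\beta')^{j}=0$ for $j>n-k$ and $(\beta'')^{j}=0$ for $j>k$, only the term $\binom{n}{n-k}\rho^{-2(n-k)}(\beta')^{n-k}\wedge(\beta'')^{k}$ survives; as $(\beta')^{n-k}\wedge(\beta'')^{k}$ is a fixed positive multiple of $dV$, the last integral is a bounded multiple of $\rho^{-2(n-k)}\operatorname{vol}(\Sigma_{2\rho})$, hence of $R^{2k}$, independently of $\rho$. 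The mechanism is that the only price of the singular factor $\rho^{-2}$ of the cutoff is paid in the $n-k$ transverse directions, where it is exactly cancelled by the thinness $\rho^{2(n-k)}$ of the slab.

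Granting this, the theorem follows immediately. Feeding the transverse decay into the slab estimate,
$$\int_{\Sigma_\rho}\left(dd^cu\right)^{n}\leq C_1\,\rho^{\,n(1+\alpha)}\qquad\left(\text{resp. }C_1\,\rho^{\,n\beta}\right),$$
whereas $\left(dd^cu\right)^n\geq c_0\,dV$ forces $\int_{\Sigma_\rho}\left(dd^cu\right)^{n}\geq c_1\,\rho^{\,2(n-k)}$. Letting $\rho\to0^{+}$ yields a contradiction exactly when $n(1+\alpha)>2(n-k)$, i.e. $\alpha>1-\tfrac{2k}{n}$ (resp. $n\beta>2(n-k)$, i.e. $\beta>2-\tfrac{2k}{n}$) --- which is the hypothesis. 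I expect the only delicate point to be the bookkeeping in the slab estimate: making the powers of $\rho$ cancel requires tracking bidegrees and the ranks of $\beta'$ and $\beta''$ carefully; everything else is routine, once one recalls that $u$ is locally bounded, so that $\left(dd^cu\right)^n$ and the integrations by parts make sense in the Bedford--Taylor framework.
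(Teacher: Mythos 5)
Your argument is correct, and it takes a genuinely different route from the paper's. The paper follows Urbas's barrier method: after flattening the analytic set it constructs an explicit plurisubharmonic barrier $w=A\|z'\|^{2}+A^{-\gamma}C_{0}+\cdots$ on a thin polydisc, verifies $w\geq u$ on the boundary, arranges $(dd^{c}w)^{n}=A^{n-k}B^{k}\leq\frac12$ so that the comparison principle propagates $w\geq u$ to the interior, and then evaluates $w$ at an interior point, where nonnegativity of $u$ forces $\frac{n-k}{k}\geq\frac{1+\alpha}{1-\alpha}$, i.e.\ $\alpha\leq 1-\frac{2k}{n}$. You instead compare Monge--Amp\`ere masses directly: your anisotropic Chern--Levine--Nirenberg estimate on the slab $\Sigma_{\rho}$ bounds the mass from above by $(\sup_{\Sigma_{2\rho}}u)^{n}\leq C\rho^{n(1+\alpha)}$ (resp.\ $C\rho^{n\beta}$), while the density bound forces mass at least comparable to $\rho^{2(n-k)}$; the exponent count is the same in both proofs, as it must be. Your key point --- that in the expansion of $(\rho^{-2}\beta'+\beta'')^{n}$ only the term $\rho^{-2(n-k)}(\beta')^{n-k}\wedge(\beta'')^{k}$ survives and its singularity is exactly absorbed by the volume of the slab --- is correct, and the nested integrations by parts are legitimate in the Bedford--Taylor framework because $u$ is continuous and nonnegative. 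Two harmless bookkeeping points: the cutoffs in the $z''$ directions force the suprema to be taken over sets reaching out to $\{|z''|<2R\}$ rather than $\{|z''|<R\}$, which is immaterial since the transverse decay $u\leq C_{0}|z'|^{1+\alpha}$ depends only on $|z'|$; and the reduction to $k'=k$ is best phrased as intersecting a higher-dimensional $V$ with a generic linear slice near a regular point. What your approach buys is the elimination of the comparison principle and of the delicate tuning of the constants $A$, $B$, $\varepsilon$ in the barrier: the mechanism is isolated as a pure tension between mass concentration and transverse decay. What the barrier method buys is that it avoids integration by parts for currents altogether (only the comparison principle is used) and that it is the template which transfers to the real Monge--Amp\`ere and viscosity settings from which the paper draws its analogy.
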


Such a theorem may find applications in the study of the local regularity of the complex Monge-Amp\`ere equation. Indeed the result is yet another evidence that $\mathcal C^{1,\beta}$ functions  ($\beta>1-2/n$) with strongly positive and $\gamma$-H\"older continuous Monge-Amp\`ere density $\left (0<\gamma<1\right)$ should be classical solutions. If such a statement is true, then the Pogorelov example is ``the worst one'' in the H\"older scale and for a smooth strictly positive density any solution which is more regular should be automatically smooth. For the real Monge-Amp\`ere equation analogous theorem was proven by Urbas and Caffarelli  \cite{Ur88, Caf90b}. In the complex setting the problem is still largely open and  we refer to \cite{DZZ11,Wa13} for partial results in this direction. It should be noted that in the Sobolev scale Pogorelov examples are indeed the worst ones as the main result in \cite{BD11} shows.

On the other hand the Hausdorff dimension of the whole zero set is much harder to control. We have divided our investigation in the planar  (i.e. $n=1$) case and the multidimensional one.

When considering the planar case  we deal with strictly subharmonic functions. Such a setting sounds very classical but quite to our surprise we were unable to find much in the existent literature. On the bright side we found a lot of results in a closely related {\it free boundary problem} theory which in a sense can be thought of as a one-sided version of minimum sets  (\cite{Bla01,Caf77, Caf81}). In the free boundary problem theory the equations are usually considered for
 substantially more regular right hand sides and the main purpose is to establish additional regularity for the free boundary set. Thus
 the technical details are quite different at places.  In particular the following estimate was a strong motivation for our
 investigations  (see \cite{Caf81}):
\begin{theorem}[Caffarelli'81]
 Let $u\in\mathcal C^{1,1}\left (U\right),\ U\Subset\mathbb C$ be a nonnegative subharmonic function satisfying
$$\Delta u=f$$
on the set $u>0$ for some Lipschitz strictly positive function $f$. Then the free boundary of $u$ has locally finite $1$-dimensional Hausdorff measure. In particular its Hausdorff dimension is no more than 1.
\end{theorem}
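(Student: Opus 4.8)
The plan is to combine the optimal $\mathcal C^{1,1}$ regularity with a nondegeneracy estimate forced by the strict positivity of $f$, convert these into two--sided Lebesgue--density bounds for the sets $\Omega:=\{u>0\}$ and $\{u=0\}$ at points of the free boundary $\Gamma:=\partial\Omega$, and then run a Vitali covering argument against the perimeter measure of $\Omega$. \textit{Step 1 (two--sided quadratic growth).} Because $u\in\mathcal C^{1,1}$, $u\ge 0$ and $u$ attains its minimum $0$ on $\Gamma$, we have $u=|\nabla u|=0$ there, hence $u(x)\le\tfrac12\|D^2u\|_{L^\infty}\,\mathrm{dist}(x,\Gamma)^2$ on compact subsets, together with the elementary pointwise bound $|\nabla u|^2\le 2\|D^2u\|_{L^\infty}\,u$. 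For the reverse inequality fix $x_0\in\Gamma$ and $y\in\Omega$ near $x_0$; on $\Omega\cap B_\rho(y)$ the function $w(x)=u(x)-\tfrac{c}{4}|x-y|^2$, with $c>0$ the lower bound of $f$, satisfies $\Delta w=f-c\ge0$, so it is subharmonic, while $w\le 0<u(y)=w(y)$ on $\partial\Omega$; therefore $\max w$ over $\overline{\Omega\cap B_\rho(y)}$ is attained on $\partial B_\rho(y)$, producing $z\in\partial B_\rho(y)$ with $u(z)\ge u(y)+\tfrac{c}{4}\rho^2$. Letting $y\to x_0$ gives $\sup_{\partial B_\rho(x_0)}u\ge\tfrac{c}{4}\rho^2$ for all small $\rho$.

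\textit{Step 2 (density estimates).} From Step 1, for $x_0\in\Gamma$ there is $z\in\overline{B_{r/2}(x_0)}$ with $u(z)\ge\tfrac{c}{16}r^2$; the quadratic \emph{upper} bound then forces $\mathrm{dist}(z,\Gamma)\ge\theta r$ for a fixed $\theta>0$, so $B_{\theta r}(z)\subset\Omega\cap B_r(x_0)$ and $|\Omega\cap B_r(x_0)|\ge c_\ast r^2$. The opposite estimate, $|\{u=0\}\cap B_r(x_0)|\ge c_\ast r^2$, is the delicate one and does \emph{not} hold at every free boundary point. Comparing $u$ on $B_r(x_0)$ with the solution $W$ of $\Delta W=f$, $W=u$ on $\partial B_r(x_0)$, one finds $u-W\ge0$ superharmonic with $(u-W)(x_0)=-W(x_0)$, so if $\{u=0\}$ were too thin in $B_r(x_0)$ the Green representation would force the spherical average of $u$ over $\partial B_r(x_0)$ to exceed $c''r^2$; passing to blow--up limits $u_r(x)=u(x_0+rx)/r^2$ --- which, by the monotonicity formula for the obstacle problem, are homogeneous quadratic solutions with right--hand side $f(x_0)$, namely either a half--plane solution $\tfrac{f(x_0)}{2}\big((x-x_0)\cdot e\big)_+^2$ or a nonnegative quadratic polynomial --- excludes this exactly at the \emph{regular} points, whose blow--up is the half--plane solution and therefore has $\{u=0\}$--density $\tfrac12$.

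\textit{Step 3 (covering argument and conclusion).} At any $x_0\in\Gamma$ where both densities of Step 2 hold, the relative isoperimetric inequality in $B_r(x_0)\subset\mathbb C$, applied to $\Omega$ which occupies a fixed fraction of $B_r(x_0)$ from both sides, gives $\mathrm{Per}(\Omega;B_r(x_0))\ge c'r$. Cover the set $\Gamma_{\mathrm{reg}}\cap K$ of such points by balls $B_r(x_0^i)$, $x_0^i\in\Gamma_{\mathrm{reg}}\cap K$, with the $B_{r/5}(x_0^i)$ pairwise disjoint (a $5r$--covering); then $\sum_i\mathrm{Per}(\Omega;B_{r/5}(x_0^i))\le|D\chi_\Omega|(K')$ for a fixed $K'\Subset U$, so the number of balls is $\lesssim r^{-1}|D\chi_\Omega|(K')$ and $\sum_i\mathrm{diam}\,B_r(x_0^i)\lesssim|D\chi_\Omega|(K')$ uniformly in $r$, whence $\mathcal H^1(\Gamma_{\mathrm{reg}}\cap K)<\infty$. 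The remaining \emph{singular} free boundary points --- those with polynomial blow--up --- form in the plane a set that is either locally discrete or contained in a $\mathcal C^1$ arc, so it also has finite local $\mathcal H^1$ measure; adding the two contributions gives $\mathcal H^1(\Gamma\cap K)<\infty$, and in particular $\dim_{\mathcal H}\Gamma\le 1$.

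\textit{The main obstacle.} Step 3 tacitly uses that $\Omega=\{u>0\}$ has \emph{locally finite perimeter}, i.e. that the free boundary is measure--theoretically thin; this is not a consequence of $\mathcal C^{1,1}$ regularity and the density bounds alone. Proving it from a merely Lipschitz (rather than smooth) $f$ is the technical heart of the theorem: one compares $u$, at every dyadic scale around a free boundary point, with the solution $U$ of the constant--coefficient obstacle problem $\Delta U=f(x_0)$, and controls and sums over scales the perturbation generated by the oscillation $|f(x)-f(x_0)|\le \mathrm{Lip}(f)\,|x-x_0|$. This scale--by--scale comparison is also what propagates the regular/singular dichotomy of Step 2 down to this weak regularity of the datum.
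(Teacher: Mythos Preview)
The paper does not prove this statement: it is quoted as a background result of Caffarelli, with reference [Caf81], and no argument is supplied in the text. So there is no ``paper's own proof'' to compare against; the relevant benchmark is Caffarelli's original short note.

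That said, your sketch is not a complete proof, and you yourself flag the gap. Step~3 presupposes that $\Omega=\{u>0\}$ has locally finite perimeter, and your final paragraph concedes that this is ``the technical heart of the theorem'' while only describing, in one sentence, a scale-by-scale comparison that is supposed to deliver it. That is precisely the content of the result, so at present the write-up is an outline rather than a proof. Moreover, the machinery you invoke in Step~2 --- Weiss-type monotonicity, blow-up classification into half-plane versus polynomial solutions, and the regular/singular dichotomy --- postdates 1981 and is substantially heavier than what Caffarelli actually used; his argument obtains the finite-perimeter bound directly from the equation $\Delta u=f\chi_{\{u>0\}}$, the $\mathcal C^{1,1}$ bound, nondegeneracy, and the Lipschitz control on $f$, without any stratification of the free boundary. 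Your Step~1 (quadratic two-sided growth) and the $\{u>0\}$-density half of Step~2 are correct and standard; it is the $\{u=0\}$-density and the passage to finite $\mathcal H^1$ that need to be redone without assuming what is to be proved.
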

 Blank noticed in \cite{Bla01} that it is enough to assume that $f$  (still strictly positive) is in $W^{1,p}$ for some $p>2$. In fact it
 is even enough to assume that $f\in L^{\infty}$ and $f\in W^{1,1}$. On the other hand Blank himself gave a very interesting example in
 \cite{Bla01} showing that for $f$ less than Dini smooth the free boundary can be badly behaved- in particular it can spin around a point
 infinitely often. All this suggested that the minimum sets, just like free boundaries, can be badly behaved but are of Hausdorff
 dimension less or equal to one.
  
Our next result disproves that:
\begin{theorem} In the planar case there are compact sets $K$ and $FB$, such that $K$ is a minimum set of a strictly subharmonic function and $FB$ is a free boundary such that $dim_{\mathcal H}K=dim_{\mathcal H}FB>1$.
\end{theorem}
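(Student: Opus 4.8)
The plan is to build a strictly subharmonic function $u \geq 0$ on a planar domain whose zero set is a prescribed compact set $K$ of Hausdorff dimension strictly greater than $1$, and then to obtain the free boundary example by a localization/reflection trick. The natural target for $K$ is a Cantor-type set: take a generalized Cantor set $C_\lambda \subset [0,1]$ (removing at step $j$ a proportion that shrinks slowly) engineered so that $\dim_{\mathcal H} C_\lambda = \beta$ for any prescribed $\beta \in (0,1)$, and let $K = C_\lambda \times C_\lambda \subset \mathbb{R}^2 = \mathbb{C}$, which then has Hausdorff dimension $2\beta$; choosing $\beta \in (1/2, 1)$ gives $\dim_{\mathcal H} K = 2\beta > 1$. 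The heart of the matter is to realize such a $K$ as $\{u = 0\}$ with $\Delta u \geq c > 0$.

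\smallskip

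\textbf{The potential-theoretic construction.} To produce $u$, I would start from the equilibrium (logarithmic) potential or, better, an explicit superharmonic-type obstacle. A clean approach: let $\mu$ be a suitable positive measure supported on $K$ (e.g. the natural Cantor measure, or Lebesgue measure on a fattened neighborhood that collapses to $K$), and consider
\[
w(z) \;=\; \int \log\frac{1}{|z-\zeta|}\, d\mu(\zeta),
\]
which is superharmonic; one wants a function vanishing exactly on $K$. The better route is the \emph{obstacle problem} itself: set $\psi$ to be a smooth function that is negative precisely off a shrinking neighborhood of $K$ and consider the least superharmonic majorant, or equivalently solve $\Delta u = \chi_{\{u>0\}} \cdot f$ with obstacle $0$. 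To get $\Delta u \geq c > 0$ on the zero set too, one instead builds $u$ by hand as an infinite sum $u = \sum_j 2^{-a_j} \phi_j$, where each $\phi_j \geq 0$ is a smooth bump adapted to the $j$-th generation of gaps in the Cantor construction, chosen so that $\phi_j$ vanishes on $K$, $\Delta \phi_j \geq 0$ everywhere with $\Delta \phi_j \geq 1$ on a controlled set, and the $\Delta\phi_j$ have overlapping supports covering a full neighborhood of $K$ so that $\sum_j 2^{-a_j}\Delta \phi_j \geq c > 0$ uniformly. One must add $\varepsilon|z|^2$ to be safe. Convergence in $\mathcal C^0$ (and $W^{1,2}_{loc}$) of the series, together with the vanishing of each term on $K$, gives $u \geq 0$, $u|_K = 0$, $u > 0$ off $K$ (this last point needs that for each $z \notin K$ some $\phi_j(z) > 0$), and $\Delta u \geq c$ in the distributional/viscosity sense.

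\smallskip

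\textbf{From minimum set to free boundary.} Once $u$ is constructed with $\{u = 0\} = K$, the free boundary example is essentially automatic: restrict attention to a half-space situation, or note that the free boundary of $u$ (as a solution of the obstacle problem with right-hand side $f = \Delta u$ on $\{u > 0\}$, which is $L^\infty$ and strictly positive but \emph{not} Dini-continuous by design) is $\partial\{u > 0\} = \partial(\mathbb{C} \setminus K)$. If $K$ has empty interior — which the Cantor square does — then $\partial\{u>0\} \supseteq K$ up to its boundary, and in fact $FB = \partial K = K$ since $K$ is nowhere dense, giving $\dim_{\mathcal H} FB = \dim_{\mathcal H} K = 2\beta > 1$. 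The consistency with Caffarelli's theorem and Blank's refinements is precisely that here $f$ fails the Dini (resp. $W^{1,1} \cap L^\infty$) hypothesis.

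\smallskip

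\textbf{Main obstacle.} The delicate point is the \emph{simultaneous} control: forcing $u$ to vanish on a set as large as a Cantor square while keeping $\Delta u$ bounded below by a positive constant. Vanishing on a big set wants $u$ to be ``flat'' there, which fights the positive-Laplacian lower bound; the resolution is that $K$ has measure zero and is nowhere dense, so near any point of $K$ there are nearby gap regions where the bumps $\phi_j$ carry all the needed Laplacian mass — but making the bookkeeping of the exponents $a_j$ versus the Cantor dissection rate $\lambda_j$ work out, so that the series converges uniformly \emph{and} the summed Laplacian stays bounded below, is the technical crux. A secondary subtlety is verifying $u > 0$ strictly off $K$ (no spurious extra zeros), which requires the supports of the $\phi_j$ to exhaust $\mathbb{C} \setminus K$.
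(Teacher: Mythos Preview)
Your bump construction contains a fatal contradiction. You ask for smooth functions $\phi_j\geq 0$ with $\phi_j|_K=0$ and $\Delta\phi_j\geq 0$ everywhere, $\Delta\phi_j\geq 1$ on some set. But a nonnegative subharmonic function that is bounded on $\mathbb C$ (or compactly supported, as ``bumps adapted to the gaps'' would be) is identically constant by Liouville/the maximum principle, hence $\phi_j\equiv 0$. So no nontrivial building blocks of the kind you describe exist. Dropping the condition $\Delta\phi_j\geq 0$ forces each $\Delta\phi_j$ to be strictly negative at the peak of the bump, and then one has to show these negative contributions are dominated by positive ones from other scales; this is a genuinely different and much harder problem that you do not address. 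The ``add $\varepsilon|z|^2$'' patch is also incompatible with your goal: it destroys $u|_K=0$.

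The paper's proof is entirely different and avoids these issues by choosing a \emph{connected} set rather than a totally disconnected one. It takes $K=J_\lambda$, the Julia set of $z^2+\lambda z$ for small $|\lambda|$, which is a quasicircle of Hausdorff dimension $>1$. The strictly subharmonic function is $u_K=(V_K)^{2/\alpha}$, a power of the Green function with pole at infinity; strict subharmonicity comes from the formula $\Delta u_K=c_\alpha (V_K)^{2/\alpha-2}|\partial V_K/\partial z|^2$ together with (i) the estimate $|\partial V_K/\partial z|\sim \sinh V_K/\mathrm{dist}(\cdot,K)$, valid because the complement is simply connected, and (ii) the \L ojasiewicz--Siciak bound $V_K\geq C\,\mathrm{dist}(\cdot,K)^\alpha$ with $\alpha<2$, which follows from H\"older regularity of the Riemann map via its quasiconformal extension. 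The bounded component is filled in by the same trick applied to the interior Green function. The free boundary $FB$ is then $J_\lambda$ itself, viewed as $\partial\{u_K>0\}$ for $u_K$ on the unbounded component. Note that for your Cantor square the Green-function route would also fail: the paper remarks that when $K$ has infinitely many components the zeros of $\partial V_K/\partial z$ cluster on $K$, so $|\partial V_K/\partial z|$ has no positive lower bound near $K$.
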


In fact in can be checked that for any $p>1,\ \varepsilon>0$ the Laplacian density in our examples can be taken to belong to $L^p$ and to $W^{1,1-\varepsilon}$ which shows that Caffarelli theorem is fairly sharp.

In the multidimensional case the Hausdorff dimension of the minimum sets can also be larger than (the expected) $n$ as the examples \ref{juliamanydim} and \ref{generalizedpogorelov} show. The corresponding free boundary problem is also of interest - in fact
free boundary problems have been
 considered for nonlinear operators \cite{Bla01,
 KKPS00, LS03}. To our knowledge the free boundary problem for the complex Monge-Amp\`ere equation has not been thoroughly investigated
 and we plan to consider this in a future article \cite{DD}.
\section{preliminaries}
In this section we collect the definitions and basic properties of the notions that will appear later on.

{\bf Minimum sets and free boundaries.} The following definition of a minimum set will be used throughout:
\begin{definition} Let $U$ be an open set in $\CC$ and $K$ be a closed subset in $U$. Then $K$ is said to be a minimum set if there exists a plurisubharmonic function $u$ on $U$ such that $u\geq 0$ and $K\subset u^{-1}\left (0\right)$.
 \end{definition}
If $u$ satisfies further restrictions these yield constraints on $K$. In particular we will be interested in the case when $u$ satisfies the condition
\begin{equation}
 \left (i\pa\bar{\pa}u\right)^n\geq c>0
\end{equation}
 (we refer to \cite{Kli91} for the pluripotential definition of the complex Monge-Amp\`ere operator). Of course if $u\in\mathcal C^2$ this
 is equivalent to strict plurisubharmonicity but our main interest will be what happens for more singular $u$.

In the planar case we introduce a seemingly weaker notion of a {\it function strictly subharmonic at $K$}:
\begin{definition}
 A nonnegative planar subharmonic function $u$ is said to be strictly subharmonic at its minimum set $K$ if
there exists  $c>0$ such that for any $z_0\in K$ $$\liminf_{r\rightarrow 0^{+}}1/r^2\int_{B\left (z_0,r\right)}\Delta u\geq c.$$ 
\end{definition}
Intuitively this means that close to $K$, $u$ is strictly subharmonic in the average sense.

Next we define the free boundary set. As we shall consider only the planar case, we give the definition only in this setting.

Following \cite{Caf81} if $u\geq 0$ is subharmonic in a domain $\Omega$ in $\mathbb C$, such that 
\begin{enumerate}
\item on $\Omega\left (u\right)=\lbr u>0\rbr$, $u$ satisfies $\Delta u=f$ for $f\geq c>0$;
\item $u$ and $\nabla u$ vanish continuously on $\pa\Omega\left (u\right)$,
\end{enumerate}
then the {\it free boundary} of $u$ is the set $FB\left (u\right):=\pa\Omega\left (u\right)\cap\pa\lbr u=0\rbr$.
\begin{remark}
 Usually additional regularity requirements are put on $f$. In our setting we impose nothing besides strict positivity.
\end{remark}

{\bf Porosity.}
We recall the notion of a porous set, which comes in handy in establishing bounds for the Hausdorff dimension of a given set:
\begin{definition}
 Let $K$ be a compact subset of $\CC$. Given any number $\la\in\left (0,1/2\right)$ the set $K$ is called $\lambda$-porous if there exists
 $r_0>0$,
 such that for every $r$, $0<r<r_0$ and every ball $B\left (x,r\right)\subset\CC$ there exists a ball $B\left (y,\la r\right)\subset
 B\left (x,r\right)\setminus K$.
\end{definition}
 
It is a classical fact that porosity for some $\lambda$ implies bounds on the Hausdorff dimension of the set $K$  (\cite{Ma95}). The exact relationship between the optimal bound and $\lambda$ is not explicit, yet we shall only need the following simple corollary:
\begin{corollary}\label{corollary9}
 If the compact set $K\subset\CC$ is $\left (1/2-\ep\right)$-porous for some $1/2>\ep>0$, then $dim_{\mathcal H}\left (K\right)<2n$.
\end{corollary}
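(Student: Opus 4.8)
The plan is to run the classical dyadic-cube argument for porous sets, which is self-contained and, as a bonus, produces an explicit (if far from optimal) upper bound for $\dim_{\mathcal H}(K)$; one could equally well just quote the quantitative estimate $\dim_{\mathcal H}(K)\le 2n-c(n)\lambda^{2n}$ for $\lambda$-porous sets from \cite{Ma95}. I will sketch the direct version. Write $d=2n$ and $\lambda=1/2-\ep$, and fix once and for all an integer $m$ so large that $2^{-m}\le\lambda/(2\sqrt d)$; this is possible since $\lambda>0$. Since $K$ is compact it is contained in a finite union of dyadic cubes of some side $2^{-k_0}<r_0$. For $k\ge k_0$ let $N_k$ be the number of dyadic cubes of side $2^{-k}$ that intersect $K$.

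The core claim is that $N_{k+m}\le\bigl(2^{md}-1\bigr)N_k$ for all $k\ge k_0$. To prove it, fix a dyadic cube $Q$ of side $\ell=2^{-k}$ meeting $K$ and apply the porosity hypothesis to the ball $B$ concentric with $Q$ of radius $\ell/2<r_0$: there is a ball $B(y,\lambda\ell/2)\subset B\setminus K\subset Q\setminus K$. This ball contains the concentric axis-parallel cube of side $\lambda\ell/\sqrt d$, and by the choice of $m$ we have $\lambda\ell/\sqrt d\ge 2\cdot 2^{-m}\ell$; hence in each coordinate this cube spans an interval long enough to contain a full dyadic subinterval of length $2^{-m}\ell$, so the ball $B(y,\lambda\ell/2)$ contains at least one of the $2^{md}$ dyadic subcubes of $Q$ of side $2^{-m}\ell$. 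That subcube is disjoint from $K$, so at most $2^{md}-1$ of the $2^{md}$ subcubes of $Q$ meet $K$. Summing over the $N_k$ cubes $Q$ that meet $K$ yields the claim.

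Iterating the claim from generation $k_0$ gives $N_{k_0+jm}\le N_{k_0}\,(2^{md}-1)^j$, and splitting each intermediate generation into $2^d$ children gives $N_k\le 2^{dm}N_{k_0}(2^{md}-1)^{j}$ whenever $k_0+jm\le k<k_0+(j+1)m$. Therefore
\[
\overline{\dim}_B K=\limsup_{k\to\infty}\frac{\log N_k}{k\log 2}\le\frac{\log\bigl(2^{md}-1\bigr)}{m\log 2}=d\cdot\frac{\log\bigl(2^{md}-1\bigr)}{\log\bigl(2^{md}\bigr)}<d=2n,
\]
and since $\dim_{\mathcal H}K\le\overline{\dim}_B K$ the corollary follows.

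The only delicate point is the combinatorial step inside $Q$: one must be sure that the porosity hole — wherever the definition places it — genuinely swallows an entire dyadic subcube. This is exactly why one passes from generation $k$ to generation $k+m$, with $m$ chosen as above, rather than from $k$ to $k+1$; once this is arranged, the strict inequality is automatic, because the dyadic subdivision carries no covering overhead and a whole subcube is deleted at every stage. The precise value $\lambda=1/2-\ep$ is irrelevant here beyond $\lambda>0$; it is simply the regime that will arise from the constructions in the later sections.
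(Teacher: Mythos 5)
Your argument is correct. Note, however, that the paper does not actually prove Corollary \ref{corollary9}: it simply quotes the fact that porosity bounds Hausdorff dimension as classical, with a pointer to \cite{Ma95}, and records the weak qualitative consequence it needs. So there is no proof in the paper to compare against; what you have supplied is the standard self-contained dyadic-counting proof of that classical fact, and it checks out. The key combinatorial step is handled properly: applying porosity to the ball inscribed in a dyadic cube $Q$ of side $\ell$ produces a hole $B(y,\lambda\ell/2)\subset Q\setminus K$, this hole contains an axis-parallel cube of side $\lambda\ell/\sqrt d$, and your choice of $m$ with $2^{-m}\le\lambda/(2\sqrt d)$ guarantees that cube has side at least $2\cdot 2^{-m}\ell$, hence traps a full dyadic descendant of $Q$ of generation $k+m$ (an interval of length $2h$ always contains a full aligned interval of length $h$). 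This yields $N_{k+m}\le(2^{md}-1)N_k$, hence $\overline{\dim}_B K\le d\log(2^{md}-1)/\log(2^{md})<d=2n$, and $\dim_{\mathcal H}\le\overline{\dim}_B$ finishes it. Your closing remarks are also accurate: the strict inequality only needs $\lambda>0$, and the explicit bound your argument produces is a legitimate quantitative refinement of what the paper uses, though of course far from the optimal dependence on $\lambda$. The only cosmetic point is that the paper's definition of porosity quantifies over all balls $B(x,r)$, not merely those centered on $K$, which is exactly the form you invoke for the inscribed ball of $Q$; had the definition been the centered one, a trivial adjustment (apply porosity at a point of $K\cap Q$ with radius comparable to $\ell$) would still work.
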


{\bf Green functions with pole at infinity.}
  Denote by $\mathcal L\left (\CC\right)$ the class of plurisubharmonic functions of logarithmic growth
$$\mathcal L\left (\CC\right):=\lbr u\in\mathcal{PSH}\left (\CC\right)|\ u\left (z\right)\leq \log\left (1+||z||\right)+C_u\rbr,$$
where the constant $C_u$ depends on the function $u$ but not on $z$. 

Let $K$ be a compact subset of $\CC$. The Green function of $K$ with pole at infinity, also known as the Siciak-Zahariuta extremal function (see \cite{Kli91} for more details) is defined by
\begin{equation*}
 V_K\left (z\right):=\sup\lbr v\left (z\right)|\ v\in \mathcal L\left (\CC\right), v|_K\leq 0\rbr.
\end{equation*}
This is a lower semicontinuous function in general and its upper semicontinuous regularization $V_K^{*}$ is defined by
\begin{equation}\label{VK}
V_K^{*}\left (z\right):=\limsup_{w\rightarrow z}V_K\left (w\right). 
\end{equation}

We recall the following classical theorem (its proof can be found, for example, in \cite{Kli91}): 

\begin{theorem} Let $K$ be a compact subset of $\CC$. Then $V_K^{*}\equiv+\infty$ if and only if $K$ is a pluripolar set. If $V_K^{*}\not\equiv+\infty$ then it is a plurisubharmonic function in the class $\mathcal L\left (\CC\right)$. Furthermore it is equal to zero on $K$ off a  (possibly empty) pluripolar set, and it is maximal outside $K$ in the sense that $\left (dd^cV_K^{*}\right)^n\equiv 0$ off $K$.
	\end{theorem}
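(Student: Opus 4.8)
The plan is to argue directly with the defining upper envelope. Set $\mathcal F := \{\, v \in \mathcal L(\mathbb C^n) : v|_K \le 0 \,\}$, so that $V_K = \sup\{\, v : v \in \mathcal F \,\}$. Since the constant function $0$ lies in $\mathcal L(\mathbb C^n)$ and vanishes on $K$, we have $0 \in \mathcal F$ and hence $V_K \ge 0$ on $\mathbb C^n$; on the other hand every member of $\mathcal F$ is $\le 0$ on $K$, so $V_K \equiv 0$ on $K$, and in particular $V_K^* \ge 0$ and $V_K^* \equiv 0$ on $K$ off the set $P := \{\, z \in K : V_K^*(z) > 0 \,\}$. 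I would then observe that $\{V_K^* > V_K\}$ is negligible, hence pluripolar by the Bedford--Taylor theorem, so $P$ is pluripolar; this is the ``equal to zero off a pluripolar set'' clause.

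Next comes the dichotomy. For $v \in \mathcal L(\mathbb C^n)$ put $\gamma(v) := \sup_{z \in \mathbb C^n}\bigl( v(z) - \log(1+\|z\|) \bigr) \in [-\infty, +\infty)$ and let $s := \sup_{v \in \mathcal F}\gamma(v)$. If $s < +\infty$, then $V_K(z) \le \log(1+\|z\|) + s$, so $V_K$ is locally bounded above, $V_K^*$ is plurisubharmonic, and the same bound is inherited by $V_K^*$, giving $V_K^* \in \mathcal L(\mathbb C^n)$. If $s = +\infty$, a normal-families argument in the Lelong class --- most cleanly carried out by descending to $\omega$-plurisubharmonic functions on $\mathbb P^n$, where the subfamily normalized by $\sup = 0$ is compact --- forces $V_K^* \equiv +\infty$ identically; this is the source of the all-or-nothing behaviour. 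To match the two alternatives with (non)pluripolarity of $K$ I would invoke the classical facts: if $K$ is not pluripolar, positivity of its capacity yields a Bernstein--Walsh type inequality $\sup_{\mathbb C^n}\bigl(v - \log(1+\|z\|)\bigr) \le \sup_K v + C_K$ for all $v \in \mathcal L(\mathbb C^n)$, so $s \le C_K < +\infty$; if $K$ is pluripolar, Josefson's theorem provides $\varphi \in \mathcal L(\mathbb C^n)$ with $\varphi \not\equiv -\infty$ and $K \subset \varphi^{-1}(-\infty)$, and a short argument --- for instance through Siciak's description of $V_K$ as the envelope of $(\deg P)^{-1}\log|P|$ over polynomials bounded by $1$ on $K$, pluripolar compacta being polynomially thin --- gives $V_K \equiv +\infty$ off $K$, hence $V_K^* \equiv +\infty$.

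For maximality, assume $K$ is not pluripolar, so $V_K^* \in \mathcal L(\mathbb C^n)$ is locally bounded and $(dd^c V_K^*)^n$ is a well-defined positive measure. Suppose, for contradiction, that it charges some ball $B \Subset \mathbb C^n \setminus K$. Solving the homogeneous Dirichlet problem on $B$ with boundary values $V_K^*|_{\partial B}$ produces $w$, plurisubharmonic on $B$ and continuous up to $\overline B$, with $(dd^c w)^n = 0$ in $B$ and $w = V_K^*$ on $\partial B$; by the comparison principle $w \ge V_K^*$ on $B$, and $w > V_K^*$ at some point of $B$ because the two Monge--Amp\`ere masses differ. Gluing $w$ on $B$ to $V_K^*$ on the rest of $\mathbb C^n$ yields $g \in \mathcal L(\mathbb C^n)$ plurisubharmonic, agreeing with $V_K^*$ on $K$ --- hence $\le 0$ on $K \setminus P$ --- yet with $g(z_1) > V_K^*(z_1)$ for some $z_1 \in B$. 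But $g$ is then a competitor for the envelope defining $V_{K \setminus P}$, and $V_{K \setminus P}^* = V_K^*$ since removing the pluripolar set $P$ does not change the extremal function; thus $g \le V_K^*$ everywhere, contradicting $g(z_1) > V_K^*(z_1)$. Therefore $(dd^c V_K^*)^n \equiv 0$ outside $K$.

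The hard core is the pluripolarity equivalence of the second paragraph, together with the Bedford--Taylor calculus that underpins the rest: Josefson's globalisation theorem for pluripolar sets, the coincidence of negligible and pluripolar sets, the positivity of capacity for non-pluripolar compacta, the solvability of the homogeneous Dirichlet problem, and the invariance of $V_K^*$ under removal of pluripolar sets. Granting these, the argument is routine bookkeeping with the Lelong class and the comparison principle --- which is why it suffices to refer to \cite{Kli91}.
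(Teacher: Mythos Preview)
The paper does not prove this theorem at all: it is stated as a classical fact with a bare reference to \cite{Kli91}. Your sketch is a correct outline of the standard argument one finds there --- the envelope structure of $V_K$, the Bedford--Taylor identification of negligible sets with pluripolar sets for the clause about $P$, Josefson's theorem for the pluripolar direction of the dichotomy, compactness in the Lelong class (equivalently, of $\omega$-psh functions on $\mathbb P^n$) for the non-pluripolar direction, and the balayage/Dirichlet replacement for maximality outside $K$. Your own closing line, that ``it suffices to refer to \cite{Kli91}'', is literally what the paper does.

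One small simplification: in the pluripolar case you invoke Siciak's polynomial description, but once Josefson gives $\varphi\in\mathcal L(\mathbb C^n)$ with $\varphi\not\equiv-\infty$ and $\varphi|_K=-\infty$, the functions $\varphi+c$ lie in $\mathcal F$ for every $c\in\mathbb R$, so $V_K\ge\varphi+c$ for all $c$ and hence $V_K=+\infty$ off $\{\varphi=-\infty\}$; no detour through polynomials is needed. Also, in the maximality step you are right to pass through $V_{K\setminus P}^*=V_K^*$, since $V_K^*$ need not be $\le 0$ on the pluripolar exceptional set $P\subset K$, so $g$ is only a competitor for $V_{K\setminus P}$; this subtlety is handled correctly in your sketch.
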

	
	Of course in complex dimension one the last property means that $V_K^{*}$  is harmonic off $K$.
	
	The  {\it maximality} of  Green functions outside the set $K$   implies that they decay to zero as the argument approaches  the
 boundary of $K$ in the {\it slowest} possible fashion among all plurisubharmonic functions in the class $\mathcal L$, of
 course off the aforementioned pluripolar set.

\begin{definition}
 A compact set $K$ is called regular if $V_K$ is a continuous function.
\end{definition}
In particular for regular sets $V_K^{*}=V_K$ and $K=\{V_K^{*}=0\}$,  so the pluripolar set in the above theorem is empty. In our applications more regularity of $V_K^{*}$ will be needed:
\begin{definition}
 A regular compact set $K$ is said to have {\bf H\"older continuity property} of order $\alpha$ ($K\in HCP\left (\alpha\right)$) if the
 function $V_K=V_K^{*}$ is
$\alpha$-H\"older continuous.
\end{definition}
In fact it is enough to assume that $V_K$ is H\"older continuous at all points $z\in K$  (see \cite{BK11}).

Let us note that every compact and connected set in $\mathbb C$ has the H\"older continuity property of order $\frac{1}{2}$ (\cite{T}).

A condition partially converse to  (HCP) is the so-called \L ojasiewicz-Siciak condition:
\begin{definition}
 A regular compact set $K$ is said to satisfy the {\bf \L ojasiewicz-Siciak condition} of order $\alpha$  ($K\in LS\left (\alpha\right)$) if the function $V_K=V_K^{*}$
 satisfies the inequality
$$V_K\left (z\right)\geq Cdist\left (z,K\right)^{\alpha},\ {\rm if}\ dist\left (z,K\right)\leq 1$$
for some positive constant $C$ independent of the point $z$. The distance is with respect to the usual Euclidean metric.
\end{definition}
This notion was introduced by Gendre in \cite{Ge05} and was further studied by Bia\l as-Cie\.z and Kosek in \cite{BK11}  (see also \cite{Pi14}).

The following proposition will be crucial in what follows:
\begin{proposition}\label{HoldertoLS}
Let $K$ be a connected  compact set. Let also $g$ be the Riemann conformal map from $\mathbb C\setminus\mathbb D$ to the unbounded component of $\mathbb C\setminus K$, sending the infinity to infinity. If $g$ extends to the boundary as an $\alpha$-H\"older continuous mapping, then $K$ satisfies the \L ojasiewicz-Siciak condition of order $1/\alpha$. 
\end{proposition}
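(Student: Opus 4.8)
The plan is to express $V_K$ through the conformal map $g$ and then to convert the boundary H\"older bound into the desired lower estimate. A single point is pluripolar and the assertion is then vacuous, so we may assume that $K$ is a non-degenerate continuum; it is then regular, $V_K=V_K^{*}$ is continuous, and $V_K=V_{\widehat K}$, where $\widehat K$ denotes the polynomial hull. Writing $\Omega_\infty:=\mathbb C\setminus\widehat K$ for the unbounded component of $\mathbb C\setminus K$, the restriction $V_K|_{\Omega_\infty}$ is the classical Green function of $\Omega_\infty$ with pole at infinity, so by conformal invariance
\[
V_K(z)=\log\bigl|g^{-1}(z)\bigr|,\qquad z\in\Omega_\infty,
\]
while $V_K\equiv0$ on $\widehat K$; see e.g. \cite{Kli91}. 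We will show that $V_K(z)\ge C\operatorname{dist}(z,K)^{1/\alpha}$ whenever $z\in\Omega_\infty$ and $\operatorname{dist}(z,K)\le1$; when $\mathbb C\setminus K=\Omega_\infty$ (equivalently $K$ is polynomially convex, in particular when $\mathbb C\setminus K$ is connected) this is precisely the \L ojasiewicz--Siciak condition of order $1/\alpha$.

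The argument has two steps. First, one propagates the H\"older bound off the unit circle into a collar. Decompose $g(w)=cw+h(w)$, where $c>0$ and $h$ is holomorphic and bounded near $\infty$; then $h|_{\partial\mathbb D}$ is $\alpha$-H\"older, and after the substitution $w\mapsto1/w$, which removes the pole, the classical Hardy--Littlewood theorem shows that $h$, and hence $g$, is $\alpha$-H\"older on the annulus $\{1\le|w|\le 2\}$, say $|g(w_1)-g(w_2)|\le M|w_1-w_2|^{\alpha}$ there (since $g$ is nonconstant, necessarily $\alpha\le1$). Second, fix $z\in\Omega_\infty$ with $\operatorname{dist}(z,K)\le1$, put $w:=g^{-1}(z)$ and $t:=|w|-1>0$, so that $V_K(z)=\log(1+t)$. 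If $|w|\ge2$, then $V_K(z)\ge\log2\ge(\log2)\operatorname{dist}(z,K)^{1/\alpha}$ because $\operatorname{dist}(z,K)\le1$. If $1<|w|<2$, let $\zeta:=w/|w|$; by the boundary correspondence for the homeomorphism $g$, $g$ maps $\partial\mathbb D$ into $\partial\Omega_\infty\subset K$, so $g(\zeta)\in K$, and $|w-\zeta|=|w|-1=t$. Consequently
\[
\operatorname{dist}(z,K)\le\bigl|z-g(\zeta)\bigr|=\bigl|g(w)-g(\zeta)\bigr|\le M\,t^{\alpha},
\]
so $t\ge M^{-1/\alpha}\operatorname{dist}(z,K)^{1/\alpha}$, and since $\log(1+t)\ge t/2$ for $0<t\le1$,
\[
V_K(z)=\log(1+t)\ \ge\ \tfrac12 M^{-1/\alpha}\operatorname{dist}(z,K)^{1/\alpha}.
\]
Choosing $C:=\min\{\tfrac12 M^{-1/\alpha},\ \log2\}$ completes the proof.

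The only non-routine ingredient is the first step: the self-improvement of H\"older regularity from the circle to a collar, a Hardy--Littlewood type statement (equivalently, a Poisson-kernel estimate for the exterior of the disc). This is the sole place where the holomorphy of $g$ is used, and it is what makes the exponent pass from $\alpha$ for $g$ to the reciprocal exponent $1/\alpha$ for $V_K$. Everything else is the classical identity $V_K=\log|g^{-1}|$ together with the elementary observation that the radial projection $\zeta=w/|w|$ satisfies $|w-\zeta|=|w|-1$, so that a single boundary point of $K$ already controls $\operatorname{dist}(z,K)$ from above by a power of $V_K(z)$. The order $1/\alpha$ is the natural one and matches the known endpoint: for a $\mathcal C^1$-smooth $K$ the map $g$ is Lipschitz, and one recovers $LS(1)$, which cannot be improved there.
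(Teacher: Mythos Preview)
Your argument is essentially the paper's: write $V_K=\log|g^{-1}|$ on the unbounded component, take $w=g^{-1}(z)$, project radially to $\zeta=w/|w|\in\partial\mathbb D$, and use $|g(w)-g(\zeta)|\le M(|w|-1)^{\alpha}$ together with $g(\zeta)\in K$ to bound $\operatorname{dist}(z,K)$ from above, then invert. The paper does exactly this in three lines, reading the hypothesis as H\"older continuity of the extended map on the closure $\overline{\mathbb C\setminus\mathbb D}$, so your Hardy--Littlewood step (promoting H\"older boundary values to H\"older regularity on a collar) is extra caution rather than a different idea; likewise your case split $|w|\gtrless 2$ and the inequality $\log(1+t)\ge t/2$ make explicit what the paper leaves implicit. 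Your observation that the argument only reaches points in the unbounded component of $\mathbb C\setminus K$ is correct and is a genuine caveat the paper glosses over; in the applications (Example~\ref{julia}) the bounded component is handled separately anyway.
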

\begin{proof}
	The complement on the Riemann sphere of a connected compact set is simply connected and hence the Riemann mapping exists.
 Let $z\in \mathbb C\setminus K$ be a point satisfying $dist\left (z,K\right)\leq 1$. Let $w\in \mathbb C\setminus\mathbb D $ be the
 preimage of $z$
 under $g$. If $w_0$ is the closest point to $w$ lying on the unit circle then by assumption we obtain
\begin{equation*}
 C\left (|w|-1\right)^{\alpha}=C|w-w_0|^{\alpha}\geq|g\left (w\right)-g\left (w_0\right)|=|z-g\left (w_0\right)|.
\end{equation*}
If now $g^{-1}$ denotes the inverse mapping of $g$ we have
\begin{equation}
 |g^{-1}\left (z\right)|-1\geq\left (\frac{|z-g\left (w_0\right)|}{C}\right)^{1/\alpha}\geq \left (\frac{dist\left (z,K\right)}{C}\right)^{1/\alpha}
\end{equation}
The proof is finished by noticing that $V_K\left (z\right)=\log|g^{-1}\left (z\right)|$ in this case.
\end{proof}
\begin{remark}
	A stronger version of Carath\'eodeory theorem says that a further necessary condition for the assertion of Proposition
 \ref{HoldertoLS} to hold is that $\partial K$ should be locally connected.
\end{remark}
{\bf Quasiconformal mappings.}
The notion of a quasiconformal mapping is a generalization of the classical conformal maps. Below we present one of the equivalent definitions:
\begin{definition}
Let $f:U\rightarrow\Omega$ be a homeomorphism between domains in the complex plane. Now $f$ is said to be $K$-quasiconformal for some $K\geq 1$ if for any $z\in U$
$$\limsup_{r\rightarrow 0^{+}}\frac{max_{|h|=r}|f\left (z+h\right)-f\left (z\right)|}{min_{|h|=r}|f\left (z+h\right)-f\left (z\right)|}\leq K.$$
\end{definition}
$K$-quasiconformal mappings for $K=1$ are exactly the conformal ones. For $K>1$ these mappings are much more flexible, yet they share some of the basic properties of conformal maps.

The following is a classical regularity theorem for such maps:
\begin{theorem}
If $f:U\rightarrow\Omega$ is $K$-quasiconformal, then for any compact $E\Subset U$ $f|_E$ is a $1/K$-H\"older mapping with H\"older constant dependent only on $dist\left (E,\partial U\right)$.
\end{theorem}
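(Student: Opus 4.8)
The plan is to deduce the H\"older bound from the \emph{quasi-invariance of the conformal modulus of ring domains}, which is the classical route producing the sharp exponent $1/K$. Recall that for a doubly connected domain $A$ conformally equivalent to a round annulus $\{1<|\zeta|<R\}$ one sets $\mathrm{mod}(A)=\tfrac1{2\pi}\log R$. The only property of $K$-quasiconformal maps I would invoke is that, for every ring domain $A\subset U$,
\[
\tfrac1K\,\mathrm{mod}(A)\ \le\ \mathrm{mod}\big(f(A)\big)\ \le\ K\,\mathrm{mod}(A).
\]
Deriving this inequality from the infinitesimal dilatation bound in the definition above is, I expect, the main obstacle: one has to show that a homeomorphism satisfying $\limsup_{r\to0}\big(\max_{|h|=r}|f(z+h)-f(z)|\big)\big/\big(\min_{|h|=r}|f(z+h)-f(z)|\big)\le K$ is absolutely continuous on almost every line and differentiable a.e.\ with $|f_{\bar z}|\le\tfrac{K-1}{K+1}|f_z|$ a.e., and then pass to the modulus inequality via the classical length--area (Rengel) argument. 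I would take this from Lehto--Virtanen or V\"ais\"al\"a as a black box; everything afterwards is elementary plane geometry.

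Granting the modulus inequality, fix a compact $E\Subset U$ and put $d=\mathrm{dist}(E,\partial U)$. Then $E'=\{z:\mathrm{dist}(z,E)\le d/2\}$ is a compact subset of $U$, so $f(E')$ is bounded; set $M=\mathrm{diam}\,f(E')$. Given $z_1,z_2\in E$ with $r:=|z_1-z_2|<d/2$, consider the ring domain $A=B(z_1,d/2)\setminus[z_1,z_2]$. Since $[z_1,z_2]\subset\overline{B(z_1,r)}$, monotonicity of the modulus under enlargement of a complementary component gives $\mathrm{mod}(A)\ge\mathrm{mod}\big(B(z_1,d/2)\setminus\overline{B(z_1,r)}\big)=\tfrac1{2\pi}\log\tfrac{d}{2r}$. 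On the image side, $f(A)=f\big(B(z_1,d/2)\big)\setminus f([z_1,z_2])$ is a ring domain whose bounded complementary component $f([z_1,z_2])$ is a continuum of diameter $\ge\delta:=|f(z_1)-f(z_2)|$, and whose distance to the unbounded complementary component $\mathbb C\setminus f\big(B(z_1,d/2)\big)$ is at most $M$, because both $f([z_1,z_2])$ and $f\big(\partial B(z_1,d/2)\big)$ lie in $f(E')$. The standard Teichm\"uller-type upper bound for the modulus of a ring separating a continuum of diameter $\ge\delta$ from a set at distance $\le M$ then gives $\mathrm{mod}\big(f(A)\big)\le\tfrac1{2\pi}\log\tfrac{CM}{\delta}$ for an absolute constant $C$ (one may take $C=16$), valid since $\delta\le M$.

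Combining the two one-sided bounds with $\mathrm{mod}(f(A))\ge\tfrac1K\mathrm{mod}(A)$ and cancelling $2\pi$ gives $\tfrac1K\log\tfrac{d}{2r}\le\log\tfrac{CM}{\delta}$, hence
\[
|f(z_1)-f(z_2)|\ \le\ CM\,(2/d)^{1/K}\,|z_1-z_2|^{1/K}\qquad\text{for }\ |z_1-z_2|<d/2,
\]
while for $|z_1-z_2|\ge d/2$ the bound holds trivially after enlarging the constant by $\mathrm{diam}\,f(E)\cdot(2/d)^{1/K}$. This yields the claimed $1/K$-H\"older continuity on $E$, with a constant depending only on $K$, on $d=\mathrm{dist}(E,\partial U)$, and on the a priori bound $M$ for $f$ near $E$. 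Finally, the exponent comes out \emph{exactly} $1/K$ because the lower bound for $\mathrm{mod}(A)$ and the upper bound for $\mathrm{mod}(f(A))$ carry the same leading coefficient $\tfrac1{2\pi}$; this matching of normalizations is precisely the feature that the $L^p$-theory of the Beltrami equation does not see, which is why I would prefer the modulus method here.
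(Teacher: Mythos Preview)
The paper does not prove this theorem; it is quoted as a classical fact with a reference to Ahlfors's \emph{Lectures on Quasiconformal Mappings}. Your proposal is precisely the standard modulus-of-rings argument one finds there (or in Lehto--Virtanen), and it is correctly executed: the Gr\"otzsch-type lower bound for $\mathrm{mod}(A)$ by comparison with the round annulus $\{r<|z-z_1|<d/2\}$, the Teichm\"uller upper bound for $\mathrm{mod}(f(A))$ via a continuum of diameter $\geq\delta$ separated from a point at distance $\leq M$, and the $K$-quasi-invariance of modulus combine to give exactly the exponent $1/K$. The black box you flag---passing from the metric dilatation condition to the analytic definition and then to the modulus inequality---is genuinely the nontrivial ingredient, but it is also the part every textbook supplies, so invoking it is appropriate.

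One remark worth recording: you correctly observe that the H\"older constant depends not only on $d=\mathrm{dist}(E,\partial U)$ and $K$ but also on an a priori bound $M=\mathrm{diam}\,f(E')$ for $f$ near $E$. This dependence is unavoidable (replacing $f$ by $\lambda f$ rescales the constant by $|\lambda|$), so the paper's assertion that the constant depends \emph{only} on $\mathrm{dist}(E,\partial U)$ is imprecise as literally written. Your formulation is the accurate one; in the paper's applications the relevant maps are normalized at infinity, which is what supplies the missing bound.
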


The following corollary of this result will be used later on:

\begin{corollary}
 If $f$ is a conformal mapping from a domain $U$ onto a domain $\Omega$ which admits a $K$-quasiconformal extension to a  domain
 $U'$, such that $U\Subset U'$, then $f$ is $1/K$-H\"older continuous up to the boundary of $U$.
\end{corollary}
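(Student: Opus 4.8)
The plan is to obtain this as a direct consequence of the preceding regularity theorem for quasiconformal mappings. Write $\tilde f\colon U'\rightarrow\tilde\Omega$ for the given $K$-quasiconformal extension of $f$, so that $\tilde f|_U=f$. Since $U\Subset U'$, the closure $\overline U$ is a compact subset of $U'$ and $\delta:=dist\left (\overline U,\partial U'\right)>0$.

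First I would apply the quoted quasiconformal regularity theorem to the map $\tilde f$ with the compact set $E:=\overline U\Subset U'$. This gives that $\tilde f|_{\overline U}$ is $1/K$-H\"older continuous, with H\"older constant depending only on $\delta$ (and on $K$). Next, observe that $\tilde f$ is a homeomorphism of $U'$, hence in particular continuous on the compact set $\overline U$; since it agrees with $f$ on the dense subset $U\subset\overline U$, the restriction $\tilde f|_{\overline U}$ is precisely the unique continuous extension of $f$ to $\overline U$. Consequently $f$ extends to a $1/K$-H\"older continuous map up to $\partial U$, which is the assertion.

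There is essentially no serious obstacle here: the corollary is a formal consequence of the cited theorem, the only content being the observation that the hypothesis $U\Subset U'$ places $\overline U$ as a compact set at positive distance from $\partial U'$, so that the interior H\"older estimate for the extension already encodes the boundary behaviour of $f$ itself. The single point deserving a line of care is that the estimate furnished by the theorem is uniform on the whole compact set $E=\overline U$, boundary included — which is exactly the form in which that theorem is stated — rather than merely locally uniform on the open set $U$.
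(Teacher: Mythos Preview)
Your argument is correct and is exactly the intended one: the paper states this result as an immediate corollary of the preceding regularity theorem without giving a proof, and your application of that theorem to the compact set $E=\overline U\Subset U'$ for the extension $\tilde f$ is precisely the implicit reasoning. Nothing further is needed.
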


For more information on quasiconformal mappings one should consult \cite{Ah}.

\section{A remark on Mooney's argument in the real case}
In this section we shall briefly recall the existent theory of minimum sets for convex functions and the real Monge-Amp\`ere operator. An obvious generalization of the argument of Mooney from \cite{Mo15} yields a dependence of the additional regularity of the convex function and the dimension of the minimum set.
 
In \cite{Caf93} Caffarelli established the following bound for the Hausdorff dimension of the minimum set of a convex function:
\begin{theorem}
 Let $0\leq v\in\mathcal{CVX}\left (\Omega\right),\ \left (\Omega\Subset\mathbb R^n\right)$ satisfies $Det\left (D^2v\right)\geq C>0$  (the inequality is understood in the
 viscosity sense). Then
$$dim_{\mathcal H}\lbr v^{-1}\left (0\right)\rbr<\frac n2.$$
\end{theorem}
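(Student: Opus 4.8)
The plan is to reduce the theorem to a statement about affine subspaces and then run a dimension count driven by the Monge--Amp\`ere measure. First I would observe that, $v$ being convex and nonnegative, $v^{-1}(0)=\{v\le 0\}$ is a convex subset of $\Omega$, so its Hausdorff dimension equals the dimension $k$ of its affine hull (a convex set spanning a $k$-plane has nonempty relative interior, hence contains a relative $k$-ball, and it is contained in a $k$-plane). It thus suffices to prove $k<n/2$; if $k=0$ there is nothing to prove, so I assume $k\ge 1$, fix a point in the relative interior of $v^{-1}(0)$, and normalize coordinates so that this point is the origin, the affine hull of $v^{-1}(0)$ is $\mathbb R^k\times\{0\}\subset\mathbb R^k\times\mathbb R^{n-k}$, the slab $\overline{B^k}(0,2r_0)\times\{0\}$ lies in $v^{-1}(0)$ for some $r_0>0$, and $v$ is $L$-Lipschitz near the origin.

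The heart of the argument is a competition between two estimates for $\mu_v(Q_\delta)$, where $Q_\delta:=\overline{B^k}(0,r_0)\times\overline{B^{n-k}}(0,\delta)$ and $\delta\in(0,r_0]$. From $Det(D^2v)\ge C$ read in the viscosity sense one gets $\mu_v\ge C\,\mathcal L^n$ as measures, hence $\mu_v(Q_\delta)=|\partial v(Q_\delta)|\ge C|Q_\delta|\ge c_1\delta^{\,n-k}$. For the reverse estimate I would exploit the minimum set: if $x=(y,z)\in Q_\delta$ and $p=(p_y,p_z)\in\partial v(x)$, testing the subgradient inequality against the points $(y',0)\in v^{-1}(0)$ with $|y'|\le 2r_0$ gives $\langle p_y,y'-y\rangle\le\langle p_z,z\rangle-v(x)\le|p_z|\,|z|$; choosing $y'-y$ of length $r_0$ along $p_y$ yields $r_0|p_y|\le|p_z|\,|z|$, so $|p_y|\le (L/r_0)\,\delta$ and $|p_z|\le L$ throughout $Q_\delta$. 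Therefore $\partial v(Q_\delta)$ lies in a box whose $p_y$-side has length of order $\delta$ and whose $p_z$-side is bounded, whence $\mu_v(Q_\delta)=|\partial v(Q_\delta)|\le c_2\delta^{\,k}$.

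Comparing the two bounds gives $c_1\delta^{\,n-k}\le c_2\delta^{\,k}$ for all small $\delta$, which forces $n-k\ge k$, i.e. $k\le n/2$; this already settles the theorem when $n$ is odd. To upgrade to the strict inequality one must exclude the borderline case $n$ even, $k=n/2$, and for this the crude bound $|p_z|\le L$ must be improved, since $\partial v$ ought to degenerate as $x\to v^{-1}(0)$. If $v$ were in $\mathcal C^{1,\alpha}$ this is immediate: $\nabla v$ vanishes on the relative interior of $v^{-1}(0)$ and is $\alpha$-H\"older, so $|p_z|\le|\nabla v(x)|\le c\delta^{\alpha}$ on $Q_\delta$, and the same computation then yields $\mu_v(Q_\delta)\le c_2'\delta^{\,k+\alpha n}$; the comparison $c_1\delta^{\,n-k}\le c_2'\delta^{\,k+\alpha n}$ is violated for small $\delta$ exactly when $\alpha>1-\tfrac{2k}{n}$, which is precisely the sharper statement $\dim_{\mathcal H}(v^{-1}(0))<k$ recorded in the Proposition of the Introduction, the value $\alpha=0$ recovering $k\le n/2$.

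The hard part will be the passage from $k\le n/2$ to $k<n/2$ in the purely convex category, where no H\"older control on $\nabla v$ is available: the elementary estimates above are exactly tight at $k=n/2$ (each $Q_\delta$ genuinely carries Monge--Amp\`ere mass of order $\delta^{n/2}$ and its subgradient image genuinely fills a box of the stated size). The required normal flatness of $v$ near its minimum set must then be extracted from the strict positivity of the density itself — heuristically, a convex function growing linearly off a minimum set of positive codimension carries no Monge--Amp\`ere mass there, so $Det(D^2v)\ge C>0$ does rule this out. I would make this precise via Caffarelli's affine-invariant analysis of the sections $\{v<h\}$: normalizing each section to a set comparable to the unit ball by an affine map $T_h$, whose Jacobian degenerates in the $n-k$ transverse directions because $\{v<h\}$ collapses onto $v^{-1}(0)$ as $h\to 0^+$, one studies the renormalized functions $h^{-1}v\circ T_h^{-1}$ — whose zero sets remain $k$-dimensional of definite size while their Monge--Amp\`ere densities stay bounded below — and shows that the limiting configuration as $h\to 0^+$ contradicts the maximality of $k$.
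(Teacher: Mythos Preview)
Your argument for the non-strict bound $k\le n/2$ is correct and runs along a genuinely different track from the one the paper sketches (Mooney's). The paper bounds the volume of the sections $S_h=\{v<h\}$ from above by $Ch^{n/2}$ via John's lemma and comparison with a paraboloid, and from below by $ch^{n-k}$ using Lipschitz growth off $L$; you instead bound the Monge--Amp\`ere measure $\mu_v(Q_\delta)=|\partial v(Q_\delta)|$ of a shrinking slab from below by the density hypothesis and from above by a direct subgradient calculation. The two arguments are in a sense Legendre-dual, and yours has the pleasant feature of avoiding both John's lemma and the comparison principle; your $\mathcal C^{1,\alpha}$ refinement recovering the Proposition of the Introduction is also correct.

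The genuine gap is the borderline case $k=n/2$. Your sketch there --- renormalising the sections $\{v<h\}$ by affine maps $T_h$ and passing to a limit --- is not a proof: it is an appeal to Caffarelli's original compactness machinery, which is precisely what Mooney's argument was designed to bypass, and you give no indication of how the limiting contradiction would actually arise. By contrast the paper's treatment of $k=n/2$ is a one-line trick: add to $v$ a suitable linear function vanishing on $L$ so that the resulting $\tilde v$ still vanishes on $L$, still has Monge--Amp\`ere density $\ge C$, but now satisfies $\tilde v(te)=o(t)$ as $t\to 0^+$ along some perpendicular direction $e$; the section $\{\tilde v<h\}$ then extends to distance $l(h)$ in that direction with $l(h)/h\to\infty$, so its volume exceeds $ch^{n/2}$ for small $h$, contradicting the upper bound. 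This step does not transplant mechanically into your subgradient framework --- $\tilde v$ is no longer nonnegative, and the improved flatness lives on a ray rather than uniformly on $Q_\delta$ --- so to close the argument you would either need a sharper version of your box estimate or simply switch to the section picture for this final step.
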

The proof makes implicit use of the local Lipschitz regularity of $v$.

In \cite{Mo15} Mooney gave a beautiful and completely elementary proof of the above fact which we sketch below. From this argument it is obvious that better dimension bounds are possible if more regularity on $v$ is assumed a priori.

Recall that a {\it section} associated to $v$ centered at $x$, supported by a subgradient vector $p$ and of height $h>0$ is the set defined by
$$S_{h,p}^v\left (x\right)=\lbr y\in\Omega| v\left (y\right)\leq v\left (x\right)+p\ {\color{red}\cdot}\left (y-x\right)+h\rbr.$$

As the graph of any convex function at any point is supported from below by a hyperplane (not necessarily unique), the existence of $p$ is guaranteed for any $x\in\Omega$. Of course if $v$ is differentiable at $x$, then $p=\nabla v\left (x\right)$. 

Mooney's argument hinges on the following proposition:
\begin{proposition}\label{sections}
 If $v$ solves $Det\left (D^2v\right)\geq1$ in a bounded domain $\Omega\in\mathbb R^n$ then for any section $S_{h,p}^v\left (x\right)$ one has the volume bound
$$|S_{h,p}^v\left (x\right)|\leq Ch^{n/2}.$$
\end{proposition}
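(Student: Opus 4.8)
Following \cite{Mo15}, the plan is to reduce by an affine normalization to a section comparable to a ball, and then to compare the rescaled function with a single paraboloid. First I would normalize: since $p$ belongs to the subdifferential of $v$ at $x$, the function $y\mapsto v(y)-v(x)-p\cdot(y-x)$ is nonnegative, vanishes at $x$, has the same Hessian as $v$, and carries the section $S_{h,p}^v(x)$ onto the sublevel set $\{v\le h\}$; so I may assume $v\ge0$, $v(x)=0$ and $S_h:=\{v\le h\}$, taken compactly contained in $\Omega$ (the only case used in the dimension estimates). Next, exploit the affine covariance of the operator: if $T(y)=Ly+b$ and $w(y):=|\det L|^{2/n}v(T^{-1}y)$, a direct Hessian computation shows that $\det D^2 w\ge1$ in the Alexandrov sense whenever $\det D^2 v\ge1$ does, while $\{w\le h|\det L|^{2/n}\}=T(S_h)$. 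Applying John's lemma to the convex body $S_h$ I pick $T$ with $B(0,1)\subset T(S_h)\subset B(0,n)$, and I set $\tilde h:=h\,|\det L|^{2/n}$, so that $T(S_h)=\{w\le\tilde h\}$. Comparing volumes, $|\det L|=|T(S_h)|/|S_h|\le|B(0,n)|/|S_h|$, hence
\[
\tilde h\ \le\ |B(0,n)|^{2/n}\,\frac{h}{|S_h|^{2/n}}.
\]

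The heart of the matter is the universal lower bound $\tilde h\ge\tfrac12$. I would obtain it by comparing $w$ with the paraboloid $\phi(y):=\tilde h+\tfrac12(|y|^2-1)$ on $B(0,1)$. On $\partial B(0,1)$ we have $\phi\equiv\tilde h\ge w$ since $B(0,1)\subset\{w\le\tilde h\}$, while $\det D^2\phi=1\le\det D^2 w$ (read, as throughout, in the Alexandrov sense, which for convex functions coincides with the viscosity one). The comparison principle for the real Monge--Amp\`ere operator then forces $w\le\phi$ on all of $B(0,1)$; evaluating at the origin and using $w\ge0$ gives $0\le w(0)\le\tilde h-\tfrac12$, i.e. $\tilde h\ge\tfrac12$. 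Substituting this into the last display yields $|S_h|^{2/n}\le 2\,|B(0,n)|^{2/n}\,h$, that is $|S_h|\le C_n\,h^{n/2}$ with $C_n=2^{n/2}|B(0,n)|$, which is the assertion of the proposition.

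The step I expect to be the real obstacle is dealing with \emph{degenerate} sections: if $S_h$ is a very thin, elongated slab, the crude bound $|S_h|\le|\nabla v(S_h)|$ coming from $\det D^2 v\ge1$ gives no power of $h$ at all, since the gradient image is not controlled in the long directions. The affine renormalization provided by John's lemma is exactly what neutralizes this anisotropy and brings the section to the scale of the unit ball, after which the one-line paraboloid comparison closes the estimate. A secondary point one must not overlook is the standing hypothesis that $S_h$ is an honest, compactly contained sublevel set, so that $w\equiv\tilde h$ on $\partial(T(S_h))$ and the comparison principle applies with the correct boundary values.
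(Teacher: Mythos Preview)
Your proof is correct and follows essentially the same line as the paper's sketch: subtract the supporting affine function, use John's lemma to affinely normalize the section to a set trapped between $B(0,1)$ and $B(0,n)$, and then compare with a single paraboloid to convert the Monge--Amp\`ere lower bound into a volume bound. The only cosmetic difference is that the paper normalizes so that the function is nonpositive with minimum $-h$ and compares with $\|x\|^2-1$, whereas you keep the function nonnegative and compare with $\tilde h+\tfrac12(|y|^2-1)$; these are equivalent up to adding the constant $h$.
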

\begin{proof}(Sketch)
Translating if necessary, one can assume that the center of mass of $S_{h,p}^v\left (x\right)$ is at the origin. Subtracting an affine function  (which does not change the Monge-Amp\`ere density) one can further assume that $p$ is the zero vector and $v$ is non positive with minimum equal to $-h$. By John's lemma, the convex set $S_{h,0}^v\left (x\right)$ can be transformed by a linear change of coordinates $A$ to a normalized convex set  (that is a convex subset containing the unit ball and contained in a larger concentric ball of fixed radius). Then a comparison of $u\left (Ax\right)$ with $||x||^2-1$ gives a bound for $|detA|$ and hence for the volume of $S_{h,0}^v\left (x\right)$.
 
\end{proof}

If now $v$ vanishes on a $k$-dimensional germ of a  hyperplane $L$ passing through the center of coordinates, then by Lipschitz regularity $v\left (x\right)\leq Cdist\left (x,L\right)$ and thus $S_{h,0}^v\left (0\right)$ grows at least as $\frac hC$ in the directions perpendicular to the plane. Hence its volume grows at least like $\frac{h}{C}^{n-k}$, contradicting for small $h$ Proposition \ref{sections} if $k>\frac{n}2$. The case $k=\frac{n}2$ can be handled by adding a linear function  (vanishing on $L$) to $v$ so that the one-sided growth $l\left (h\right)$ in one of the perpendicular directions is slower than $h$  (in the sense that $\lim_{h\rightarrow 0^{+}}\frac{l\left (h\right)}{h}=+\infty$).

Repeating this argument one immediately gets the following result which in particular covers the result of Urbas \cite{Ur88} on strict convexity:
\begin{proposition}Let $0\leq v\in\mathcal{CVX}\left (\Omega\right),\ \left (\Omega\Subset\mathbb R^n\right)$ satisfies $Det\left (D^2v\right)\geq C>0$ (the inequality is understood in the
 viscosity sense). Assume moreover that $v\in\mathcal C^{1,\alpha}$ for $\alpha>1-\frac{2k}n$. Then
$$dim_{\mathcal H}\lbr v^{-1}\left (0\right)\rbr< k.$$

In particular if $\alpha>1-2/n$, then $v$ is strictly convex in the sense that its graph does not contain affine germs.
\end{proposition}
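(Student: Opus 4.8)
The plan is to rerun the sectional argument of Mooney \cite{Mo15} recalled above, this time feeding in the full $\mathcal C^{1,\alpha}$ decay of $v$ at its minimum set. After the harmless normalization $v\mapsto C^{-1/n}v$, which changes neither $v^{-1}(0)$ nor the regularity class, we may assume $Det(D^2v)\geq 1$ in the viscosity sense, so Proposition \ref{sections} applies and every section of $v$ has volume at most $Ch^{n/2}$. Since $v\geq 0$ is convex, the set $Z:=v^{-1}(0)=\{v\leq 0\}$ is convex, so $\dim_{\mathcal H}Z$ coincides with the dimension of its affine hull; consequently it suffices to show that $Z$ cannot contain the germ of a $k$-dimensional affine subspace. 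The gain over the merely Lipschitz estimate comes from the following: since $v$ is nonnegative, of class $\mathcal C^1$, and $Z\subset\Omega$, every point of $Z$ is an interior minimum of $v$, so $\nabla v\equiv 0$ on $Z$; combining this with the $\alpha$-Hölder continuity of $\nabla v$ on a fixed subdomain $\Omega'\Subset\Omega$, with seminorm $M$, the integral form of Taylor's formula gives, for $z\in Z$ and $x$ near $Z$,
$$v(x)\leq v(z)+\nabla v(z)\cdot(x-z)+M|x-z|^{1+\alpha}=M|x-z|^{1+\alpha},$$
and hence $v(x)\leq M\,dist(x,Z)^{1+\alpha}$.

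Suppose now, for contradiction, that $Z$ contains a germ of a $k$-dimensional affine subspace $L$, which after a translation we take to pass through the origin $0\in Z$. Consider the section $S_{h,0}^v(0)$: since $v(0)=0$ and $0=\nabla v(0)$ is the subgradient of $v$ at $0$, this is simply the sublevel set $\{v\leq h\}$, and by Proposition \ref{sections} its volume is at most $Ch^{n/2}$. On the other hand, for $x$ in a fixed neighborhood of $0$ with $dist(x,L)\leq(h/M)^{1/(1+\alpha)}$ the orthogonal projection of $x$ onto $L$ lies in $Z$, so the decay estimate gives $v(x)\leq h$; thus $S_{h,0}^v(0)$ contains a cylinder over a fixed $k$-dimensional disk of $L$ with half-width $(h/M)^{1/(1+\alpha)}$ in each of the $n-k$ directions orthogonal to $L$, whence
$$|S_{h,0}^v(0)|\geq c\,h^{(n-k)/(1+\alpha)},\qquad c>0\ \text{independent of}\ h.$$
For small $h$ these two estimates contradict each other precisely when $\tfrac{n-k}{1+\alpha}<\tfrac n2$, i.e. $k>\tfrac{n(1-\alpha)}{2}$, which is exactly the hypothesis $\alpha>1-\tfrac{2k}{n}$; because that inequality is strict there is no borderline case to treat (in contrast to the $\tfrac n2$ bound recalled above, whose equality case needs the extra device of adding a linear function). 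This contradiction shows $Z$ contains no such germ, so $\dim_{\mathcal H}(v^{-1}(0))<k$.

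The strict convexity statement is the case $k=1$: if the graph of $v$ contained an affine germ, then the tangent plane of $v$ at an interior point of the underlying segment would support the graph along a convex set $W$ with at least two points; subtracting the corresponding affine function produces a nonnegative convex function $\tilde v$ with $Det(D^2\tilde v)=Det(D^2v)\geq C$, still of class $\mathcal C^{1,\alpha}$, and with $\tilde v^{-1}(0)=W$, so that $\dim_{\mathcal H}(\tilde v^{-1}(0))\geq 1$ — contradicting the estimate just proved (applied to $\tilde v$ with $k=1$, using $\alpha>1-\tfrac2n$). This recovers Urbas' theorem \cite{Ur88}. The points that genuinely require care, already in \cite{Mo15}, are that $C$, $M$ and the radius of the disk in $L$ be chosen independently of $h$, and that Proposition \ref{sections} be invoked legitimately for the sublevel sets $\{v\leq h\}$; both are handled by fixing $\Omega'\Subset\Omega$ once and for all, and I foresee no other serious obstacle. (Alternatively one may bypass the convexity of $Z$ entirely: since $\{v\leq Mr^{1+\alpha}\}$ contains the $r$-neighborhood of $Z\cap\Omega'$, that neighborhood has volume $O(r^{n(1+\alpha)/2})$, and the standard relation between volume growth of $r$-neighborhoods and the upper Minkowski dimension yields the sharper bound $\dim_{\mathcal H}Z\leq\overline{\dim}_{B}Z\leq\tfrac{n(1-\alpha)}{2}$ directly.)
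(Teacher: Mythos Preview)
Your proof is correct and follows essentially the same route as the paper's: both arguments feed the $\mathcal C^{1,\alpha}$ decay $v(x)\leq M\,dist(x,L)^{1+\alpha}$ into Mooney's section volume bound $|S_{h,0}^v(0)|\leq Ch^{n/2}$ to force $(n-k)/(1+\alpha)\geq n/2$, contradicting $\alpha>1-2k/n$. You supply some justifications the paper leaves implicit---the convexity of $Z$ (hence $\dim_{\mathcal H}Z$ equals the dimension of its affine hull), the vanishing of $\nabla v$ on $Z$, and the reduction of the strict convexity statement to the case $k=1$ by subtracting a supporting affine function---and you add the alternative endgame via Minkowski dimension of tubular neighborhoods, which is a nice observation but not a genuinely different approach.
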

\begin{proof}
 Let $L$ be the affine piece in $v^{-1}\left (0\right)$ of largest dimension. Exploiting that $v\left (x\right)\leq Cdist\left (x,L\right)^{1+\alpha}$ the section
 $S_{h,0}^v\left (0\right)$ in each direction perpendicular to $L$ has
 length no less than
 $\frac{h}{C}^{1/{\left (\alpha+1\right)}}$. Coupling this with Proposition \ref{sections} we get the inequality 
$$n-k\geq \frac{n\left (1+\alpha\right)}{2},$$
from which the result follows.
\end{proof}

\section{One dimensional case}
In complex dimension one Harvey-Wells theorem \cite{HW73} tells us that the minimum set of a smooth strictly subharmonic function is contained in a $\mathcal C^1$ smooth submanifold of $\mathbb C$. This is in fact trivial since $\Delta u\geq c>0$ implies that at any minimum point either $u_{xx}$ or $u_{yy}$ is nonzero, thus by the implicit function theorem one of the the sets $\lbr u_x=0\rbr,\ \lbr u_y=0\rbr$ is locally a graph of a $\mathcal C^1$ function. Our examples show that this argument fails dramatically if the smoothness assumption is dropped.

As already noted in the introduction, every regular compact set $K$ in the complex plane is the minimum set of a subharmonic function. When it comes to strictly subharmonic functions, one immediately sees that it must hold that $K=\partial K$, since a constant on $int K$ will fail to be strictly subharmonic. Also if we want global strictly subharmonic functions then trivially $K$ cannot disconnect the plane, since the maximum principle will force any bounded connected component of $\mathbb C\setminus K$ to belong to $K$. In particular no Jordan curve can be a minimum set of a global function and hence a direct converse of Harvey-Wells theorem fails, that is not every compact subset of a $\mathcal C^{1}$ smooth submanifold of $\mathbb C$ is a minimum set of a global strictly subharmonic function.    In order to make $K$ a minimum set of a strictly subharmonic function the basic idea is to perturb the function $V_K^{*}$ suitably. Heuristically the function $u_K:=\left (V_K^{*}\right)^2$ is ``more subharmonic'' with Laplace density equal to 

\begin{equation}\label{trick}
 \Delta u_K=2V_K^{*}\Delta V_K^{*}+2\left|\frac{\pa V_K^{*}}{\pa z}\right|^2,
\end{equation}
with the the first term vanishing as $K$ is a regular set. The nontrivial issue is then to establish a lower bound on $|\frac{\pa V_K^{*}}{\pa z}|$ {\it up to the boundary} $\partial K$. What matters is  the exact rate of convergence of $V_K$ to zero as $z\rightarrow z_0\in K$ (i.e. the exponent in the \L ojasiewicz-Siciak condition). Also it is important to rule out clustering
 of vanishing points for $\frac{\pa V_K^{*}}{\pa z}$ to the boundary of $K$.

Our first example shows that the minimum set can fail to be locally a graph:
\begin{example}
Let 
$$K:=\lbr re^{i\theta}|\ r\in[0,1], \theta=0,\frac{2\pi}{3}\ {\rm or}\ \frac{4\pi}{3}\rbr.$$
then the function $u_K:=\left (V_K^{*}\right)^{4/3}$ is strictly subharmonic in the sense that $\Delta u_k\geq c>0$, but $K=u_K^{-1}\left (0\right)$ is not a graph of a function around the  origin.
\end{example}
\begin{proof}
 
 We shall exploit the explicit formula for $V_K^{*}$ which can be obtained from the conformal map from $\mathbb C\setminus K$ to
 the complement of the unit disc  (see \cite{IP02})- it reads 
$$V_K^{*}\left (w\right)=V_K\left (w\right)=\frac13\log\left |2w^3-1+\sqrt{\left (2w^3-1\right)^2-1}\right|=\log|f\left (w\right)|$$
with the branch chosen so that $w\rightarrow \left (2w^3-1+\sqrt{\left (2w^3-1\right)^2-1}\right)^{1/3}=f\left (w\right)$ sends $\mathbb C\setminus K$ to the exterior of the unit disc. 

By computation $V_K\left (z\right)$ is Lipschitz at all points of $K$ except on the endpoints and $0$. At the endpoints it is $1/2$-H\"older, while $V_K=O\left (|w|^{3/2}\right)$ at the origin  (this also follows from \cite{Pi14}). Hence the exponent $4/3$ is the right one to prevent vanishing of the Laplace density at zero. Indeed
\begin{align*}
&\Delta u_K\left (w\right)=\frac1{9}[\log|f\left (w\right)|]^{-2/3}\frac{|\frac{\partial f}{\partial w}\left (w\right)|^2}{|f\left (w\right)|^2}
&=\frac1{9}[\log|f\left (w\right)|]^{-2/3}\frac{|w|^4|f(w)|^{2/3}}{|\left (w^3-1\right)w^3||f\left (w\right)|^2}.
\end{align*}
This quantity is obviously nowhere vanishing, and it is bounded below by a positive constant around the origin, by the exact asymptotics of $|f\left (w\right)|$. Also, by direct calculation, $f\in L^p$ for any $p<3/2$. 

\end{proof}
On the other hand a slight modification of the example above cannot be a minimum set of a strictly subharmonic function:
\begin{example}\label{five} Let
$$K:=\lbr re^{i\theta}|\ r\in[0,1], \theta=0,\frac{2\pi}{5}\,\ \frac{4\pi}{5},\ \frac{6\pi}{5} {\rm or}\ \frac{8\pi}{5}\rbr.$$
 Then there is no strictly subharmonic function in a neighborhood of $K$ which is nonnegative and vanishes on $K$.
\end{example}
\begin{proof}
 Again by we have the explicit formula
$$V_K\left (w\right)=V_K^{*}\left (w\right)=\frac15\log\left |2w^5-1+\sqrt{\left (2w^5-1\right)^2-1}\right|.$$
In particular $V_K=O\left (|w|^{5/2}\right)$ at the origin.

Suppose now that $u$ is a nonnegative strictly subharmonic function vanishing on $K$. Assume without loss of generality that $\Delta u\geq1$. Fix a neighborhood $U$ of $K$, such that $u$ is bounded from above on $\bar{U}$. Then by maximality of $V_K$ in $U\setminus K$ one has $V_K\geq cu$ for a sufficiently small positive constant $c$.

Fix a small positive radius $r$, such that the disc $B\left (0,r\right)\subset U$. Then by the Jensen formula we obtain
\begin{align*}
 &\frac1{2\pi}\int_0^{2\pi}u\left (re^{i\theta}\right)d\theta=\frac1{2\pi}\int_0^{2\pi}u\left (re^{i\theta}\right)d\theta-u\left (0\right)=
\frac1{2\pi}\int_0^rs^{-1}\int_{B\left (0,s\right)}\Delta u\left (z\right)dzds\\
&\geq \frac1{2\pi}\int_0^rs^{-1}\pi s^2ds=r^2/4.
\end{align*}
 But on the other hand

$$\frac1{2\pi}\int_0^{2\pi}u\left (re^{i\theta}\right)d\theta\leq \frac1{2\pi c}\int_0^{2\pi}V_K\left (re^{i\theta}\right)d\theta\leq Cr^{5/2}.$$
Coupling both estimates we get a contradiction for $r$ small enough.
\end{proof}
\begin{remark}
 The explicit formulas for the Green functions for the above sets were found by R. Pierzcha\l a (compare example 5.2 in \cite{Pi16}). We
 would like to thank him for
 pointing out this reference.
\end{remark}

The examples above suggest that $V_K$ should converge to zero not faster than quadratically for any $w\in\partial K$ i.e. the \L ojasiewicz-Siciak exponent should not be larger than 2. If such is the case an application of analogous idea to more general sets $K$ results in an abundance of examples. The following theorem summarizes what can be gotten
 by this construction:
\begin{theorem}\label{1dim}
 Let $K$ be a compact set with empty interior satisfying $LS\left (\alpha\right)$ for $\alpha<2$ (this implicitly rules out polar or
 non-regular sets). Then
\begin{enumerate}
\item If $K$ is connected and does not disconnect the plane  then
 it is a minimum set of a strictly subharmonic function;
\item If $K$ is porous then it is a minimum set of a function strictly subharmonic at $K$;
\end{enumerate} 
If in turn for some point $w\in K$ one has $V_K\left (z\right)=O\left (|z-w|^{\alpha}\right)$ for
 $\alpha>2$ then $K$ cannot be a minimum set of a strictly subharmonic function.
\end{theorem}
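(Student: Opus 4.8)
The plan is to dispose of the obstruction first and then to obtain both positive statements from a single construction.

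\emph{The negative assertion.} I would argue exactly as in Example \ref{five}. Suppose $u\ge 0$ is strictly subharmonic on a neighbourhood $U$ of $K$ with $u|_K=0$; rescaling, $\Delta u\ge 1$. (We may assume $K$ regular, otherwise replace $V_K$ by $V_K^{*}$, which obeys the same bound near $w$.) Shrinking $U$ so that $u$ is bounded on $\overline U$, the function $cu-V_K$ is subharmonic on $U\setminus K$, nonpositive on $\partial K$ (both terms vanish there) and, for small $c>0$, nonpositive on $\partial U$; hence $u\le c^{-1}V_K$ on $U$ by the maximum principle. Near $w$ this gives $u(z)\le C|z-w|^{\alpha}$, so the circular average of $u$ over $\partial B(w,r)$ is $\le Cr^{\alpha}$; on the other hand Jensen's formula together with $\Delta u\ge 1$ and $u(w)=0$ makes this average $\ge r^{2}/4$. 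Since $\alpha>2$, comparing the two for $r\to 0^{+}$ is a contradiction.

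\emph{The construction for (1) and (2).} I would take the ansatz \eqref{trick} in the sharpened form
$$u_K:=\bigl(V_K^{*}\bigr)^{2/\alpha}.$$
As $\alpha<2$, the exponent $2/\alpha$ exceeds $1$, so $t\mapsto t^{2/\alpha}$ is convex and nondecreasing on $[0,\infty)$ and $u_K$ is subharmonic, nonnegative, and (by regularity) vanishes precisely on $K$. Off $K$,
$$\Delta u_K=\tfrac2\alpha\Bigl(\tfrac2\alpha-1\Bigr)\bigl(V_K^{*}\bigr)^{2/\alpha-2}\bigl|\nabla V_K^{*}\bigr|^{2},$$
a positive multiple of $(V_K^{*})^{2/\alpha-2}|\nabla V_K^{*}|^{2}$, with no term from $\Delta V_K^{*}$ because the equilibrium measure is carried by $\{V_K^{*}=0\}$; since $|K|=0$ this reduces strict subharmonicity of $u_K$ to a lower bound for $|\nabla V_K^{*}|$ near $\partial K$. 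In the connected non-disconnecting case this is transparent: $\mathbb C\setminus K$ is simply connected on the sphere, there is a conformal map $g\colon\mathbb C\setminus\overline{\mathbb D}\to\mathbb C\setminus K$ with $V_K=\log|g^{-1}|$, and the Koebe distortion estimate $|g'(\zeta)|\asymp \operatorname{dist}(g(\zeta),K)/\operatorname{dist}(\zeta,\partial\mathbb D)$, together with $\operatorname{dist}(\zeta,\partial\mathbb D)\asymp V_K(g(\zeta))$ near the circle, yields $|\nabla V_K(z)|\asymp V_K(z)/\operatorname{dist}(z,K)$ near $\partial K$. Feeding in the \L ojasiewicz--Siciak bound $V_K(z)\ge C\operatorname{dist}(z,K)^{\alpha}$ gives
$$\Delta u_K(z)\ \asymp\ \frac{\bigl(V_K(z)\bigr)^{2/\alpha}}{\operatorname{dist}(z,K)^{2}}\ \gtrsim\ \frac{\operatorname{dist}(z,K)^{2}}{\operatorname{dist}(z,K)^{2}}\ =\ 1$$
near $\partial K$; since $\nabla V_K$ never vanishes on $\mathbb C\setminus K$ (conformality) and $V_K$ is bounded on a bounded neighbourhood, $\Delta u_K\ge c>0$ on a full neighbourhood of $K$, proving (1).

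\emph{The porous case and the main difficulty.} The same $u_K$ should serve for (2), but one expects only the weaker conclusion, since without a global uniformisation the holomorphic function $\partial V_K^{*}/\partial z$ may have zeros clustering towards $\partial K$, precluding a pointwise gradient bound. Fixing $z_0\in K$ and small $r$, porosity produces a ball $B(y,\lambda r)\subset B(z_0,r)\setminus K$, so that $\operatorname{dist}(\cdot,K)\asymp r$ on a concentric half-ball, where $V_K^{*}\ge c_1r^{\alpha}$ by \L ojasiewicz--Siciak (and $V_K^{*}\le c_2 r^{\beta}$ using an upper modulus for $V_K^{*}$). One then wants
$$\int_{B(z_0,r)}\Delta u_K=\tfrac2\alpha\Bigl(\tfrac2\alpha-1\Bigr)\int_{B(z_0,r)\setminus K}\bigl(V_K^{*}\bigr)^{2/\alpha-2}\bigl|\nabla V_K^{*}\bigr|^{2}\ \ge\ c\,r^{2},$$
the mechanism being that $V_K^{*}$ climbs from $0$ on $K$ to order $r^{\alpha}$ inside the porosity hole over a distance $\lesssim r$, so a Cauchy--Schwarz estimate along radial segments, combined with the conformal invariance of the two-dimensional Dirichlet integral, pushes enough gradient mass into $B(z_0,r)$, while the weight $(V_K^{*})^{2/\alpha-2}$ is controlled on the dyadic annuli about $z_0$ by the two-sided bounds for $V_K^{*}$; dividing by $r^{2}$ and letting $r\to0$ gives the required liminf. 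This integral estimate is the step I expect to be the main obstacle: one must track simultaneously the size of $V_K^{*}$ and the spread of its Dirichlet mass across scales, and contend with the possible clustering of critical points of $V_K^{*}$, whereas in the connected case the Riemann map makes everything explicit. The remaining bookkeeping (that the part of $\Delta u_K$ living on $K$ is nonnegative, that $u_K$ extends subharmonically across $K$) is routine.
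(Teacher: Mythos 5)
Your treatment of the negative assertion and of part (1) is essentially the paper's own argument. The obstruction for $\alpha>2$ is handled exactly as in Example \ref{five} (maximality of $V_K$ gives $u\leq c^{-1}V_K$, Jensen's formula gives the lower bound $r^2/4$ for the circular mean, contradiction). For (1) the paper uses the same ansatz $u_K=(V_K^{*})^{2/\alpha}$ and the two-sided estimate $\left|\frac{\partial V_K^{*}}{\partial w}\right|\asymp \sinh V_K^{*}(w)/\mathrm{dist}(w,K)$ coming from simple connectivity of $\hat{\mathbb C}\setminus K$ (it quotes the Gustafsson--Sebbar inequality \eqref{greenest} rather than deriving it from Koebe distortion, but this is the same estimate), after which the \L ojasiewicz--Siciak bound gives $\Delta u_K\gtrsim 1$ near $K$, just as you write.

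The gap is in part (2), and you have flagged it yourself: the key estimate $\int_{B(z_0,r)}(V_K^{*})^{2/\alpha-2}\left|\nabla V_K^{*}\right|^{2}\gtrsim r^{2}$ is not proved, and the route you sketch (Cauchy--Schwarz along radial segments plus conformal invariance of the Dirichlet integral) is unlikely to close it: for a general porous $K$ the complement need not be connected, there is no uniformizing map, and the critical points of $\frac{\partial V_K^{*}}{\partial z}$ can cluster on $\partial K$ when $K$ has infinitely many components (this is precisely the phenomenon noted in the remark following Theorem \ref{1dim}, quoting Solynin), so no annulus-by-annulus control of the gradient is available. The paper avoids the gradient of $V_K^{*}$ entirely in this case. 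Since the conclusion of (2) is only the averaged statement $\liminf_{r\to0^{+}}r^{-2}\int_{B(w_0,r)}\Delta u_K\geq c$, it is extracted from the \emph{values} of $u_K$ alone: the \L ojasiewicz--Siciak condition gives the pointwise bound $u_K=(V_K^{*})^{2/\alpha}\geq C\,\mathrm{dist}(\cdot,K)^{2}$, hence $u_K\geq C(\lambda r/2)^{2}$ on the disc $B(w_1,\lambda r/2)$ supplied by porosity; since $u_K(w_0)=0$, the mean-value characterization of the averaged Laplacian (via the Riesz/Jensen representation, or Harnack applied to the harmonic majorant of $u_K$ on $B(w_0,2r)$) converts ``$u_K\gtrsim r^{2}$ on a fixed fraction of $B(w_0,r)$ while vanishing at the centre'' directly into the required lower bound on $\int_{B(w_0,r)}\Delta u_K$. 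You should replace your Dirichlet-mass argument by this mean-value argument; as written, part (2) is not established.
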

\begin{proof}
We start with $(1)$.

 Indeed, as $\hat{ \mathbb C}\setminus K$ is simply connected one can use the following estimate from \cite{GS}
 \begin{equation}
 \label{greenest}
 \frac{\sinh V_{K}^{*}(w)}{4\left|\frac{\partial V_{K}^{*}}{\partial w}\right|}\leq dist (w,K)\leq \frac{\sinh
 V_{K}^{*}(w)}{\left|\frac{\partial V_{K}^{*}}{\partial w}\right|}.\end{equation}
 		In particular $\frac{\pa V_K^{*}}{\pa w}$ never vanishes on $\mathbb C\setminus K$.

Let $u_K:=\left (V_K^{*}\right)^{2/\alpha}$. Then


 $$\Delta u_{K}=\frac{2}{\alpha}(V_K^{*})^{2/\alpha-1}\Delta V_{K}^{*}+\frac{2}{\alpha}\left(\frac{2}{\alpha}-1\right)(V_K^{*})^{2/\alpha-2}\left|\frac{\partial V_{K}^{*}}{\partial w}\right|^{2}$$$$=\frac{2}{\alpha}\left(\frac{2}{\alpha}-1\right)(V_K^{*})^{2/\alpha-2}\left|\frac{\partial V_{K}^{*}}{\partial w}\right|^{2}.$$
By \ref{greenest} this behaves like $$\frac{2}{\alpha}\left(\frac{2}{\alpha}-1\right)(V_K^{*})^{2/\alpha-2}\left|\frac{\sinh V_{K}^{*}}{dist(w,K)}\right|^{2}\sim\frac{2}{\alpha}\left(\frac{2}{\alpha}-1\right)\frac{(V_K^{*})^{2/\alpha}}{\left|dist(w,K)\right|^{2}},$$
	Which is bounded below by the \L ojasiewicz - Siciak condition.
	
Consider now the case when $K$ is porous. Let $u_K$ be as above and note that by assumption $u_K\left (w\right)\geq Cdist\left (w,K\right)^2$. On the other hand at every point $w_0\in K$ we have 
$$\Delta u_K\left (w_0\right)=\frac8{\pi}\lim_{r\rightarrow 0^{+}}\frac{\int_{B\left (w_0,r\right)}u_K}{r^4}.$$ 

As $K$ is porous there is a constant $\la$, $0<\la<1/2$, such that for any $r>0$ there is a point $w_1=w_1\left (r\right)\in B\left (w_0,r\right)$ such that the disc $B\left (w_1,\la r\right)$ belongs to $B\left (w_0,r\right)\setminus K$. But then for any $y\in B\left (w_1,\la r/2\right)$ the distance between $y$ and $K$ is at least $\la r/2$ and thus $u_K\left (y\right)\geq Cr^2$, by the \L ojasiewicz-Siciak condition on $V_K$. Thus
$$\liminf_{r\rightarrow 0^{+}}1/r^2\int_{B\left (w_0,r\right)}\Delta u_K\geq C' \liminf_{r\rightarrow 0^{+}}\frac{\int_{B\left (w_1,\la r/2\right)}u_K}{r^4}\geq c$$
for some positive constant $c$ dependent only on $\la$.

Finally the last statement follows from exactly the same reasoning as in Example \ref{five}.
\end{proof}
\begin{remark}
 In general $\frac{\pa V_K^{*}}{\pa w}$ vanishes somewhere away from $K$ if $K$ is not connected as the example of \newline
 $K=\cup_{j=1}^4\overline{B\left (i^j,1/2\right)}$ and the
 point
 $w=0$ shows. If one can control the distance between the set where $|\frac{\pa V_K^{*}}{\pa w}|$ is small and $K$ then $u_K$ will be
 strictly subharmonic in some neighborhood of $K$. This is always true if $K$ has finitely many components (actually $\frac{\pa
 V_K^{*}}{\pa w}$ will have exactly $k-1$ zeros, where $k$ is the number of components of $K$ which do not reduce to points, see
 \cite{So08}) and always wrong if there are infinitely many such components (since then the zeros of $\frac{\pa V_K^{*}}{\pa w}$ cluster
 on $K$, see \cite{So08}).
\end{remark}
\begin{corollary}
 Any compact regular subset $K$ of the real line is a minimum set for a function strictly subharmonic at $K$.
\end{corollary}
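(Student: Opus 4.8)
Since $K$ is a compact subset of the real line, I would prove the statement by writing down the function explicitly. In fact $u(z)=(\operatorname{Im}z)^2$ already realises $K$ as a minimum set by the letter of the definition: it is nonnegative, it is subharmonic with $\Delta u\equiv 2$, hence strictly subharmonic (in particular at $K$), and $\{u=0\}=\mathbb R\supset K$. To obtain, in the spirit of the examples above, that the zero set is \emph{exactly} $K$, I would instead take
$$u(z):=(\operatorname{Im}z)^2+\bigl(V_K^{*}(z)\bigr)^2$$
on any bounded domain $U\supset K$; here $V_K^{*}$ is finite and continuous because a regular set is non-pluripolar. The plan is then to check, in order: $u\ge 0$, being a sum of squares of real-valued functions (using $V_K^{*}\ge 0$); $u$ is subharmonic, indeed $\Delta u\ge 2$ in the distributional sense, since $(\operatorname{Im}z)^2$ has Laplacian $2$ and $(V_K^{*})^2$ is subharmonic as a convex increasing function of the nonnegative subharmonic function $V_K^{*}$; strict subharmonicity at $K$ (with constant $2\pi$) is then immediate from $\Delta u\ge 2$; and $u^{-1}(0)=\mathbb R\cap\{V_K^{*}=0\}=\mathbb R\cap K=K$, using that $\{V_K^{*}=0\}=K$ for regular $K$, as recalled in the preliminaries, and that $K\subset\mathbb R$.

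Along this route there is no genuine obstacle: the only inputs are the identity $\Delta\bigl((\operatorname{Im}z)^2\bigr)=2$, the stability of subharmonicity under composition with a convex increasing function, and the description of $\{V_K^{*}=0\}$ for regular sets. The two summands play complementary roles: $(\operatorname{Im}z)^2$ supplies the strict subharmonicity for free, while $(V_K^{*})^2$ --- which by itself need not be strictly subharmonic at $K$, its Laplacian degenerating at points of $K$ where $V_K^{*}$ decays faster than linearly, as in the three-prong example above --- only shrinks the zero set from the whole line down to $K$.

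For completeness I would also indicate how the corollary follows from Theorem~\ref{1dim}(2), which is presumably why it is stated here. One has to verify that $K$ is porous and that $K\in LS(\alpha)$ for some $\alpha<2$. Porosity is immediate: for any ball $B(x,r)\subset\mathbb C$ the ball $B\bigl(x+\tfrac{i r}{2},\tfrac r2\bigr)$ lies inside $B(x,r)$ and misses $\mathbb R\supset K$, so $K$ is $(\tfrac12-\varepsilon)$-porous for every small $\varepsilon$. The $LS$ condition is the step I expect to be the real difficulty on this route; to establish it I would use the symmetry $V_K^{*}(\bar z)=V_K^{*}(z)$ together with the representation
$$V_K^{*}(x+iy)=\tfrac12\int\log\!\Bigl(1+\tfrac{y^{2}}{(x-t)^{2}}\Bigr)\,d\mu_K(t)+V_K^{*}(x),$$
where $\mu_K$ is the atom-free equilibrium measure of $K$: the integral is bounded below by $c\,y^{2}$ uniformly in $x$ for small $y$, because an atom-free finite measure gives uniformly small mass to short intervals, while for $x$ in a bounded complementary interval of $K$ one bounds $V_K^{*}(x)$ from below using that the endpoints of that interval are regular points of $K$ (so $K$ accumulates at them and, by the Wiener criterion, not too thinly), which prevents $V_K^{*}$ from decaying as fast as $\operatorname{dist}(x,K)^{2}$ there. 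Together these give $V_K^{*}(z)\ge C\operatorname{dist}(z,K)^{\alpha}$ with some $\alpha<2$, and Theorem~\ref{1dim}(2) then applies; since $K$ is regular, the function $u_K=(V_K^{*})^{2/\alpha}$ produced there has zero set exactly $K$.
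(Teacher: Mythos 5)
Your main construction is correct and proves the corollary by a genuinely different, more elementary route than the paper's. The paper deduces the statement from Theorem \ref{1dim}(2), which requires knowing that a compact regular $K\subset\mathbb{R}$ satisfies the \L ojasiewicz--Siciak condition with some exponent $\alpha<2$ (quoted from \cite{Pi14}, where $LS(1)$ is proved) together with the obvious porosity; the function it produces, $(V_K^{*})^{2}$, is then strictly subharmonic only \emph{at} $K$, since its Laplacian $2\left|\partial V_K^{*}/\partial z\right|^{2}$ may vanish at critical points of $V_K^{*}$ off $K$. Your function $u=(\operatorname{Im}z)^{2}+(V_K^{*})^{2}$ sidesteps the \L ojasiewicz--Siciak input entirely: the verification uses only that $V_K^{*}$ is a nonnegative, locally bounded subharmonic function with zero set exactly $K$ (true for regular $K\subset\mathbb{R}$, such sets being polynomially convex), that $(V_K^{*})^{2}$ is subharmonic as a convex nondecreasing function of $V_K^{*}\geq 0$, and that $\Delta(\operatorname{Im}z)^{2}=2$. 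This yields $\Delta u\geq 2$ as a measure, i.e.\ strict subharmonicity everywhere, which is more than the corollary asserts. The price of the trick is its lack of generality: it works only because $K$ sits inside the zero set of the globally strictly subharmonic function $(\operatorname{Im}z)^{2}$, so it gives no information for sets like the Julia sets of Example \ref{julia}, where the machinery of Theorem \ref{1dim} is genuinely needed.

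Your supplementary derivation via Theorem \ref{1dim}(2), by contrast, has two real gaps. First, the uniform lower bound $c\,y^{2}$ for the integral term only yields exponent $2$ in the vertical direction (take $x\in K$ with $\operatorname{dist}(x+iy,K)=|y|$), whereas Theorem \ref{1dim} requires $LS(\alpha)$ with $\alpha$ \emph{strictly} less than $2$; to beat exponent $2$ one needs a quantitative lower bound on $\mu_K([x-r,x+r])$, and atom-freeness alone does not provide it. Second, the claim that regularity of the endpoints of a complementary interval ``prevents $V_K^{*}$ from decaying as fast as $\operatorname{dist}^{2}$'' is an assertion, not a proof: regularity by itself is compatible with arbitrarily fast polynomial decay of $V_K$ at a boundary point --- the five-pronged star of Example \ref{five} is regular yet $V_K=O(|w|^{5/2})$ at the origin --- and it is precisely the constraint $K\subset\mathbb{R}$, exploited in \cite{Pi14}, that rules this out. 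Since your first argument is self-contained, these gaps do not affect the validity of the proposal as a whole.
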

This is because any such set satisfies the \L ojasiewicz-Siciak condition with exponent $1$  (see \cite{Pi14}) and is obviously porous. 

More importantly the criterion is strong enough to produce minimum sets with Hausdorff dimension larger than one:
\begin{example}\label{julia} Let $J_{\la}$ be the Julia set of the polynomial $f_{\la}\left (z\right)=z^2+\la z$, $|\la|<1$. Then for $\la$ sufficiently close to zero $J_{\la}$ is a minimum set of  a strictly subharmonic function. The Hausdorff dimension of $J_{\la}$ satisfies $dim_{\mathcal H}J_{\la}\geq 1+0.36|\la|^2.$ 
\end{example}
\begin{proof}
 We follow closely the argument in \cite{BP87} Theorem B. In particular it is well known that for small $\la$ the Julia set is connected
 and its complement consists of two simply connected domains. As in \cite{BP87} we note that the conformal map $g_{\la}$ from the
 complement of the
 unit disc to the unbounded component $U$ of $\mathbb C\setminus J_{\la}$ admits a $K$-quasiconformal extension  (denoted by
 $\tilde{g}_{\la}$)
 to
 the whole of $\mathbb C$ for
 $\frac{K-1}{K+1}=|\la|$. In particular the conformal map $g_{\la}$ is $1/K$-H\"older continuous up to the boundary, and if $K<2$
 Proposition \ref{HoldertoLS} implies that $V_{J_{\la}}=\log|g_{\la}^{-1}|$ satisfies $LS\left (\al\right)$ for $\al<2$. Thus by Theorem
 \ref{1dim}
 there is a perturbation
 $\tilde{V}_{J_{\la}}$ which is strictly subharmonic, nonnegative and vanishing continuously at the boundary.

In order to complete the proof we need to ``fill in'' the bounded component of $\mathbb C\setminus J_{\la}$. To this end note that if $h_{\la}$ is the conformal map from the unit disc to this component  (normalized by fixing zero) then the quasiconformal reflection
$$\tilde{h}_{\la}\left (z\right)=\begin{cases} h\left (z\right)\ {\rm for}\ |z|\leq 1\\
\tilde{g}_{\la}\left (1/\overline{\tilde{g}^{-1}_{\la}\left (h\left (1/\overline{z}\right)\right)}\right)\ {\rm for}\ |z|>1                   
                  \end{cases}$$
is a $K^2$-quasiconformal mapping, hence it is $1/K^2$-H\"older continuous. Taking the Green function $G\left (z,0\right)$ with pole at zero we can apply the same reasoning  (away from 0) for $-G$ as for the function $V_K$  (note that $-G$ is still harmonic except at zero). Thus there is a strictly subharmonic function $\tilde{G}$ on the bounded component  (with small neighborhood of the origin deleted) which vanishes continuously on the boundary.

Finally the function

$$H:=\begin{cases}
  \tilde{V}_{J_{\la}}\left (z\right)\ {\rm if}\  z\in U\\
\tilde{G}\ {\rm if} z\in \mathbb C\setminus\left (J_{\la}\cup U\cup\lbr0\rbr\right) \\
0\ {\rm if}\ z\in J_{\la}
    \end{cases}
$$
satisfies all the requirements. 
\end{proof}
The function $\tilde{V}_{J_{\la}}$ solves the free boundary problem
\begin{equation}
 \begin{cases}
  \tilde{V}_{J_{\la}}\in SH\left (U\right)\\
\Delta \tilde{V}_{J_{\la}}\geq c>0\\
\lim_{z\rightarrow\partial U}\tilde{V}_{J_{\la}}=0
 \end{cases}
\end{equation}

Thus $J_{\la}$ is an example of a free boundary of Hausdorff dimension {\it larger than one} (to get the continuous vanishing of $\nabla \tilde{V}_{J_{\la}}$ note that $g_{\la}^{-1}$ is also H\"older continuous with H\"older exponents tending to $1$ as $|\la|$ goes to zero). By classical Caffarelli theorem \cite{Caf81} the free boundary is always of dimension less than $1$ if the Laplacian is Lipshitz  (and by our remark in the introduction it is enough to have the Laplacian uniformly bounded and in $W^{1,1}$). It can be checked that in our examples the Laplacians are in $W^{1,1-\varepsilon}$ for $\varepsilon$ dependent on $|\la|$, but not in $W^{1,1}$, and they are $L^p$ integrable with $p$ tending to infinity as $|\la|$ goes to zero but they are not in $L^{\infty}$. 

On the other side definite upper bounds on the Hausdorff dimension can be obtained in the case when $\Delta u\in L^{\infty}$. This can be established by proving {\it porosity} of the minimum set. The argument is classical and is standard in free boundary literature  (compare \cite{Bla01,KKPS00,LS03}) but we were unable to find the exact potential theoretic reference. Thus we reproduce the details for the sake of completeness.

\begin{theorem}
 Let $u\in\mathcal{SH}\left (\Omega\right)$ satisfies $0<c\leq\Delta u\leq C$. Then $u^{-1}\left (0\right)\cap K$ for any compact subset
 $K$
 is $\lambda$-porous
 with porosity constant dependent on
 $c$, $C$ and $dist\left (K,\partial\Omega\right)$.
\end{theorem}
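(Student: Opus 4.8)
The plan is to establish the porosity of $E:=u^{-1}(0)\cap K$ directly from the two‑sided bound on $\Delta u$, with the porosity constant coming out dependent only on $c$, $C$ and $d:=dist(K,\partial\Omega)$ (throughout I use that $u\geq 0$ with minimum value $0$, which is what ``minimum set'' means). It suffices to prove a \emph{centred} version: for every $z_0\in E$ and every $0<\rho<r_0:=d/4$ there is a ball of radius comparable to $\rho$ contained in $B(z_0,\rho)\setminus E$. From this the porosity of $E$ in the sense of the definition follows by the usual dichotomy for an arbitrary ball $B(x,\rho)$ with $\rho<r_0$: if $dist(x,E)\geq\rho/2$ a small concentric ball already works, while if $dist(x,E)<\rho/2$ one picks $z_0\in E$ with $|x-z_0|<\rho/2$, observes $B(z_0,\rho/2)\subset B(x,\rho)$, and applies the centred statement at $z_0$ with radius $\rho/2$; I expect this part to be completely routine.

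For the centred statement I would play two opposite one‑scale estimates against each other. The first is a lower bound coming from $\Delta u\geq c$: applying the Jensen/Riesz identity exactly as in Example \ref{five}, at $z_0$ and radius $\rho/2$, and using $u(z_0)=0$, one gets
$$\frac1{2\pi}\int_0^{2\pi}u\bigl(z_0+\tfrac\rho2\, e^{i\theta}\bigr)\,d\theta=\frac1{2\pi}\int_0^{\rho/2}s^{-1}\int_{B(z_0,s)}\Delta u\;\geq\;\frac{c\rho^2}{16},$$
so there is a point $w_0$ with $|w_0-z_0|=\rho/2$ and $u(w_0)\geq c\rho^2/16$. The second is a quadratic \emph{upper} bound for $u$ near any of its zeros, coming from $\Delta u\leq C$: if $w_1\in\Omega$ satisfies $u(w_1)=0$ and $B(w_1,2t)\Subset\Omega$, then $\sup_{\overline{B(w_1,t)}}u\leq 3Ct^2$. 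To see this, decompose $u=h-v$ on $B(w_1,2t)$, where $h$ is harmonic with the same boundary values as $u$ (hence $h\geq u$ by subharmonicity and $h\geq 0$ since $u\geq 0$), and $v\geq 0$ is the Green potential of the measure $\Delta u$ on $B(w_1,2t)$; the bound $\Delta u\leq C$ gives $0\leq v(z)\leq C\cdot\tfrac14\bigl((2t)^2-|z-w_1|^2\bigr)\leq Ct^2$, so in particular $h(w_1)=v(w_1)\leq Ct^2$, and Harnack's inequality on $B(w_1,2t)$ upgrades this to $\sup_{\overline{B(w_1,t)}}h\leq 3h(w_1)$; since $u\leq h$, the claim follows.

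Combining the two: set $\delta:=dist(w_0,E)$, which is $\leq|w_0-z_0|=\rho/2<r_0$ because $z_0\in E$; choose $w_1\in E$ with $|w_0-w_1|=\delta$, so that $u(w_1)=0$ and $B(w_1,2\delta)\Subset\Omega$. The quadratic bound at $w_1$ with $t=\delta$, applied at $w_0\in\overline{B(w_1,\delta)}$, gives $u(w_0)\leq 3C\delta^2$, and comparing with the lower bound $u(w_0)\geq c\rho^2/16$ forces $\delta\geq\rho\sqrt{c/(48C)}$. Hence the ball of radius $\tfrac12\sqrt{c/(48C)}\,\rho$ around $w_0$ is disjoint from $E$; and since $c\leq C$ this radius is $<\rho/2$, so that ball lies inside $B(z_0,\rho)$. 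This proves the centred porosity with constant $\lambda:=\tfrac12\sqrt{c/(48C)}<\tfrac12$, and therefore the $(\lambda/2)$‑porosity of $E$ for all radii below $d/4$, which is the asserted dependence on $c$, $C$ and $dist(K,\partial\Omega)$.

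The only step that is more than bookkeeping is the quadratic upper bound near a zero: this is the point where strict positivity plays no role and the \emph{upper} bound $\Delta u\leq C$, together with $u\geq 0$ and the vanishing $u(w_1)=0$, has to be used quantitatively (via the Green‑potential estimate and Harnack). The rest — the Jensen identity, the barrier/Harnack manipulations, and the reduction from centred porosity to porosity — is standard; in fact the whole argument is the potential‑theoretic counterpart of the non‑degeneracy and quadratic‑growth lemmas in the obstacle‑problem papers referenced in the introduction.
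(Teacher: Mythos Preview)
Your proof is correct and follows essentially the same approach as the paper: a non\-degeneracy lower bound from $\Delta u\geq c$ (you use the Jensen identity where the paper uses subharmonicity of $u-c|z-x_0|^2$, which is equivalent) combined with a quadratic upper bound near zeros from $\Delta u\leq C$ obtained via the Riesz decomposition and Harnack's inequality. Your decomposition $u=h-v$ with $h$ the harmonic majorant and $v$ the Green potential is exactly the Riesz representation formula the paper writes out explicitly with the Poisson kernel, and your Harnack factor $3$ matches the paper's Poisson-kernel bound $\tfrac{4r^2-|y|^2}{|2re^{i\theta}-y|^2}\leq 3$ for $|y|=r$; the only addition is your explicit reduction from centred to general porosity, which the paper leaves implicit.
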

\begin{proof}
Fix a disc $B\left (x_0,R\right)$ such that $B\left (x_0,2R\right)\Subset\Omega$ and $u\left (x_0\right)=0$. Since $u-c{|z-x_0|^2}$ is subharmonic, there is a point $x_1\in\partial B\left (x_0,R\right)$, such that $u\left (x_1\right)\geq c|x_1-x_0|^2$. 

Next we prove that $u\left (y\right)\leq Ddist\left (y,u^{-1}\left (0\right)\right)^2$ for some $D$ dependent merely on $c$, $C$ and the distance to the boundary of $\Omega$. The argument in fact implies that the solution is $\mathcal C^{1,1}$ {\it at the minimum points}.

To this end we shall exploit Riesz representation coupled with Harnack inequality.

It is enough to prove the estimate when the distance in question is sufficiently small for otherwise the estimate follows from the uniform bound on $u$.

Fix the point $y\in K$ and let $y_1$ be the closest point from $u^{-1}\left (0\right)\cap K$ to $y$ (if it is not unique choose any). For simplicity we may assume that $y_{1}=0$.  Let $dist\left (0,y\right)=r$. We can assume that $r$ is so small that $B\left (y_1,2r\right)\Subset\Omega$. Consider the disc $B\left (0,2r\right)$ and apply the Riesz representation to $u$ on it. We obtain
\begin{equation}\label{Riesz}
 u\left (y\right)=\frac1{2\pi}\int_{0}^{2\pi}\frac{4r^2-|y|^2}{|2re^{i\theta}-y|^2}u\left (2re^{i\theta}\right)d\theta+\int_{B\left
 (0,2r\right)}\log\left (\frac{2r|z-y|}{|4r^2-z\bar{
y}|}\right)
\Delta u\left (z\right)
\end{equation}
\begin{align*}
&\leq \frac{3r^2}{r^2}\frac1{2\pi}\int_{0}^{2\pi}u\left (2re^{i\theta}\right)d\theta+\int_{B\left (0,2r\right)}\log\left (\frac{2r|z-y|}{|4r^2-z\bar{y}|}\right)
c\\
&=3\left (u\left (0\right)-\int_{B\left (0,2r\right)}\log\left (\frac{|z|}2r\right)\Delta u\left (z\right)\right)-c\left (|y|^2-4r^2\right)\\
&\leq 3\left (C4r^2\right)-3cr^2=\left (12C+3c\right)r^2.
\end{align*}
where we have made use of the nonnegativity of $u$ and negativity of the Green function in the first inequality. Second and third inequalities follow in turn from Riesz representations of the functions $c\left (|z|^2-4r^2\right)$.

Exploiting both bounds we get that in $B\left (x_0,2R\right)$ there is a point $x_1$ at distance at least $\sqrt{\frac{c}{12C+3c}}R$ from $u^{-1}\left (0\right)$ which establishes the claimed porosity property.
\end{proof}
\begin{remark}
 An example of Blank \cite{Bla01} shows that in the case of bounded strictly positive right hand side the free boundary may spiral
 infinitely many times at
 points. In this example as well as in all examples that we are aware of the Hausdorff dimension is equal to one. It would be interesting
 to know whether this is true in general.
\end{remark}


\section{Multidimensional case}
 Example \ref{julia} can be immediately generalized to the multidimensional setting so that the minimum set is of Hausdorff dimension {\it
 larger than n}:
\begin{example}\label{juliamanydim} If $H\left (z\right)$ is the function from Example \ref{julia}, then the function
$$\tilde{H}\left (z_1,\cdots,z_n\right):=H\left (z_1\right)+\cdots +H\left (z_n\right)$$
satisfies $\left (dd^c\tilde{H}\right)^n\geq c>0$ whereas its minimum set is equal to the $n$-times Cartesian product of $J_{\la}$. 
\end{example}
In fact it is easy to construct minimum sets of even larger Hausdorff dimension (see Example \ref{generalizedpogorelov} below), but it should be emphasized that in this construction the minimum set does not contain nontrivial analytic subsets.

Our next result states that the dimension of an analytic set contained in the minimum set is controlled by the regularity of the function $u$ in the H\"older scale. In the proof we shall exploit an old idea of Urbas \cite{Ur88} with suitable modifications.
\begin{theorem}\label{thm1}
 Let $u\geq 0$ be a plurisubharmonic function  satisfying $\left (dd^c u\right)^n\geq 1$. If additionally $u\in \mathcal C^{1,\alpha}$ for
 $\alpha>1-\frac{2k}n$ if $2k\leq n$ or $u\in \mathcal C^{0,\beta}$ for $\beta> 2-\frac{2k}n$ if $2k>n$, then no
 analytic set of dimension $\geq k$ can be contained in $u^{-1}\left (0\right)$.
\end{theorem}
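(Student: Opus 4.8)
The plan is to run the complex counterpart of Mooney's argument recalled in Section~3, the ``suitable modification'' of Urbas' idea being an \emph{anisotropic} rescaling adapted to the analytic set. Assume for contradiction that $A\subset u^{-1}(0)$ is analytic with $\dim_{\mathbb C}A=m\geq k$. Passing to a regular point $p_0$ of $A$ and choosing holomorphic coordinates $(z',z'')\in\mathbb C^m\times\mathbb C^{n-m}$ centered at $p_0$, I would arrange $A\supset\{z''=0\}$ on a polydisc $P=\{|z'|<2\rho\}\times\{|z''|<2\rho\}$; a biholomorphic straightening only replaces the hypothesis by $(dd^c u)^n\geq c_0>0$ on $P$, which is harmless. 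From the regularity of $u$ I then extract the decay estimate $u(z',z'')\leq C|z''|^{\gamma}$ on $P$: in the $\mathcal C^{1,\alpha}$ case, since the nonnegative $u$ vanishes on $A$ its gradient vanishes identically along $\{z''=0\}$, so Taylor's formula gives this with $\gamma=1+\alpha$; in the $\mathcal C^{0,\beta}$ case it holds at once with $\gamma=\beta$.

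The heart of the matter is the rescaling. Fix $a\in\{|z'|<\rho\}\times\{z''=0\}\subset A$, and for $0<\delta<\rho$ set
$$u_\delta(w',w''):=\delta^{-\gamma}\,u\bigl(a+(\rho w',\delta w'')\bigr),\qquad (w',w'')\in P_1:=\{|w'|<1\}\times\{|w''|<1\}.$$
Each $u_\delta$ is plurisubharmonic and nonnegative on $P_1$, and the decay estimate forces the \emph{$\delta$-independent} bound $u_\delta\leq\delta^{-\gamma}C(\delta|w''|)^{\gamma}=C|w''|^{\gamma}\leq C$; it is essential here that $u$ vanishes on all of $A$, so that only the $z''$-directions get contracted. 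Since the affine holomorphic map $w\mapsto a+(\rho w',\delta w'')$ scales Lebesgue measure by $\rho^{2m}\delta^{2(n-m)}$ and commutes with the Monge-Amp\`ere operator, we also get
$$(dd^c u_\delta)^n=\delta^{-\gamma n}\rho^{2m}\delta^{2(n-m)}\bigl(\text{pull-back of }(dd^c u)^n\bigr)\ \geq\ c_0\,\rho^{2m}\,\delta^{\,2(n-m)-\gamma n}\,dV=:M_\delta\,dV .$$

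Now if $\gamma>2-\tfrac{2k}{n}$ then, since $m\geq k$, also $\gamma n>2(n-m)$, whence $M_\delta\to+\infty$ as $\delta\to0^{+}$. But the Chern-Levine-Nirenberg inequality applied to the uniformly bounded plurisubharmonic functions $u_\delta$ on $P_1$ bounds $(dd^c u_\delta)^n(B(0,r_0))$ by a $\delta$-independent constant on a fixed ball $B(0,r_0)\Subset P_1$; comparing with the lower bound gives $M_\delta|B(0,r_0)|\leq\text{const}$, a contradiction. This yields the theorem once one reads $\gamma=1+\alpha>2-\tfrac{2k}{n}$ as $\alpha>1-\tfrac{2k}{n}$ (meaningful exactly when $2k\leq n$, since then $1-\tfrac{2k}{n}\in[0,1)$) and $\gamma=\beta>2-\tfrac{2k}{n}$ as the stated $\mathcal C^{0,\beta}$ condition (meaningful when $2k>n$, where $2-\tfrac{2k}{n}<1$). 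The step I expect to require the most care is the \emph{anisotropy}: an isotropic dilation would only give the non-sharp threshold $\gamma>2$, and it is precisely the freedom to keep the $m$ tangential directions at a fixed scale --- available because $u$ vanishes along the entire set $A$, not merely at $p_0$ --- that produces the sharp exponent. One must also check that the pull-back identity and the Chern-Levine-Nirenberg estimate are applied within the bounded plurisubharmonic category, which is routine (see \cite{Kli91}). Mooney's argument of Section~3 is recovered by replacing $(dd^c\cdot)^n$ throughout with $\det D^2$ and Chern-Levine-Nirenberg with the elementary volume bound for the image of the subgradient map.
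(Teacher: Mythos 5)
Your argument is correct, but it follows a genuinely different route from the paper's. The paper runs Urbas' barrier scheme: after the same biholomorphic straightening of $A$ and the same reduction of the H\"older hypothesis to a decay estimate in the normal directions (which it massages, via a Young-type splitting, into $\tilde u\leq A\|z'\|^2+A^{-\gamma}C_0$ with $\gamma=\frac{1+\alpha}{1-\alpha}$), it builds an explicit anisotropic quadratic barrier $w$ on a polydisc, with coefficient $A$ in the $n-k$ normal directions and $B=(2A^{n-k})^{-1/k}$ in the $k$ tangential ones so that $(dd^cw)^n\leq\tfrac12\leq(dd^c\tilde u)^n$; checking $w\geq\tilde u$ on the boundary of the polydisc, the comparison principle propagates the inequality inside, and evaluating at an interior point as $A\to\infty$ forces $\frac{n-k}{k}\geq\gamma$, i.e.\ $\alpha\leq 1-\frac{2k}{n}$, the desired contradiction. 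Your anisotropic blow-up replaces the barrier and the comparison principle by the Chern--Levine--Nirenberg inequality: the uniform bound $u_\delta\leq C|w''|^{\gamma}$ caps the Monge--Amp\`ere mass of $u_\delta$ on a fixed ball independently of $\delta$, while the Jacobian bookkeeping produces the divergent lower bound $c_0\rho^{2m}\delta^{2(n-m)-\gamma n}$. Both proofs rest on the same two inputs --- vanishing of $u$ (and, in the $\mathcal C^{1,\alpha}$ case, of $\nabla u$) along \emph{all} of $A$, plus the freedom to treat tangential and normal directions at different scales --- so they reach the same sharp exponent; your correct observation that an isotropic dilation only yields $\gamma>2$ mirrors the role of the two separate constants $A$ and $B$ in the paper's barrier. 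What your version buys is a cleaner structure (no constants $A$, $B$, $\varepsilon$ to tune, and the $\mathcal C^{1,\alpha}$ and $\mathcal C^{0,\beta}$ cases are handled uniformly through $\gamma$), at the price of invoking CLN rather than only the comparison principle. When writing it up, do spell out the two routine points you flag: the invariance $(dd^c(u\circ T_\delta))^n=T_\delta^{*}\left((dd^cu)^n\right)$ for the affine biholomorphism $T_\delta$ (the same invariance the paper uses for $(dd^c\tilde u)^n$ after straightening), and the inclusion $T_\delta(P_1)\subset P$ for small $\delta$, so that the lower bound $(dd^cu)^n\geq c_0\,dV$ is available on the whole rescaled polydisc.
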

\begin{proof}

We shall deal with  both cases simultaneously writing $\beta=1+\alpha$ if necessary - this will not affect the argument.
 Suppose on contrary that $A$ is a $k$ dimensional analytic subset of $u^{-1}\left (0\right)$. Our goal will be to construct a barrier $v$ on a thin
 domain
 close to a
  (modification of) $A$ which will contradict the regularity that $u$ has.

Pick a point $x_0$ in the regular part of $A$. Then there is a biholomorphic mapping $\pi:U\rightarrow V$ of an open ball $U$ in $\CC$ to a neighborhood $V$ of $x_0$, such that 
$$\pi^{-1}\left (A\cap V\right)=\lbr z\in U| z_1=0,\cdots,z_{n-k}=0\rbr,$$
with $\left (z_1,\cdots,z_n\right)$ being the coordinates in $U$ centered at $0=\pi^{-1}\left (x_0\right)$. We can also assume that the Jacobian of $\pi$ at zero is equal to $1$.

Consider now the function $\tilde{u}\left (z\right):=u\left (\pi\left (z\right)\right)$. Then 
\begin{equation}\label{mautilde}
\left (dd^c\tilde{u}\left (z\right)\right)^n=\left (dd^cu\right)^n|_{\pi\left (z\right)}|Jac_{\pi}\left (z\right)|^2\geq 1/2,
\end{equation}
where $Jac_{\pi}$ stands for the (complex) Jacobian of the mapping $\pi$ and the last inequality follows by the smoothness of $Jac_{\pi}$  (we can shrink $U$ further if necessary). Denote by $M$ the $\alpha$-H\"older constant for $\nabla \tilde{u}$, which can be made as close to the H\"older constant of $\nabla u$ as necessary if $U$ is further shrunk.

Let now $z'=\left (z_1,\cdots,z_{n-k}\right), z''=\left (z_{n-k+1},\cdots,z_n\right)$ Then
\begin{equation}\label{regularityest}
\tilde{u}\left (z',z''\right)\leq \tiu\left (0,z''\right)+M||z'||^{1+\alpha}\leq A||z'||^2+A^{-\gamma}C_0, 
\end{equation}

with $\gamma=\frac{1+\al}{1-\al}$, $A$- any large positive constant and $C_0=M^{\frac2{1-\al}}\left (\left (\frac{1+\al}{2}\right)^{\frac{1+\al}{1-\al}}-\left (\frac{1+\al}{2}\right)^{\frac{2}{1-\al}}\right)$ (recall $\alpha<1$ in our convention).

Consider now the polydisc 
$$W:=\lbr z\in U|\ ||z'||\leq \rho, |z_{n-k+1}|\leq\rho,\cdots,|z_n|\leq\rho\rbr.$$
If $\rho$ is taken small enough, then $W\Subset U$. Fix such $\rho$ and
consider the barrier function $$w\left (z\right):=A||z'||^2+A^{-\gamma}C_0+\sum_{j=n-k+1}^n\frac{\ep}{\rho}\left (n\rho-Re\left (z_j\right)\right)+B\sum_{j=n-k+1}^n\left (|z_j|^2-\rho Re\left (z_j\right)\right),$$
with $0<B\leq 1$ and $\ep<<1$ to be chosen later on.

Note that if $||z'||=\rho$ then $v\geq A\rho^2+A^{-\gamma}C_0+k[\left (n-1\right)\ep-B\frac{\rho^2}{4}]$. Thus if $A\geq \sup_U\tiu+\frac{k}4$ we get $w\geq u$. 

On the other hand if for some $n\geq j\geq n-k+1$ we have $|z_j|=\rho$ then
\begin{equation}\label{boundary1}
 w\left (z',z''\right)\geq A|z'|^2+A^{-\gamma}C_0+k\left (n-1\right)\ep-\left (k-1\right)B\frac{\rho^2}4.
\end{equation}
Fixing $\ep=\frac{k-1}{\left (n-1\right)k}B\frac{\rho^2}{4}$ if $k>1$ and a small multiple of $B\frac{\rho^2}{4}$ if $k=1$ (if $\rho$ is small and $B\leq 1$ this quantity is clearly small) and exploiting (\ref{regularityest}) we again obtain $w\geq u$. 

If one can prove that $\left (dd^cw\right)^n\leq \frac12\leq \left(dd^c\tiu\right)^n$ then by comparison principle it would follow that $w\geq u$ over the whole polydisc. Note that $\left (dd^cw\right)^n=A^{n-k}B^{k}$, hence the choice $B=\left (\frac{1}{2A^{n-k}}\right)^{1/k}$  (if $A$ is large enough this is clearly less than one) satisfies this requirement.

Under such a choice of constants we obtain
$$0\leq \tiu\left (0',\rho/2,\cdots,\rho/2\right)\leq w\left (0',\rho/2,\cdots,\rho/2\right)=A^{-\gamma}C_0+k\left (n-\frac12\right)\ep-kB\frac{\rho^2}{4}.$$

We claim that the sum of the last two terms is negative. Indeed this is the case for $k=1$ and for $k>1$ we obtain
\begin{equation*}
k\left (n-\frac12\right)\ep-kB\frac{\rho^2}{4}=\left (\frac{\left (k-1\right)\left (n-1/2\right)}{n-1}-k\right)B\frac{\rho^2}{4}, 
\end{equation*}
by our choice of $\ep$, and the latter quantity is equal to $-\frac{2n-k-1}{2\left (n-1\right)}B\frac{\rho^2}4$. Comparing this with the first term above we end up with

$$0\leq A^{-\gamma}C_0-A^{-\frac{n-k}{k}}C_1\frac{\rho^2}4$$
for some numerical constant $C_1$. This must hold (for fixed small $\rho$) for every sufficiently large constant $A$, thus implying $\frac{n-k}{k}\geq\gamma$. This in turn reads
$$\alpha\leq 1-\frac{2k}n,$$
which is a contradiction.
\end{proof}

The following examples, slightly generalizing Pogorelov ones \cite{Pog71,Bl99}, show that the obtained exponents are sharp:
\begin{example}\label{generalizedpogorelov}
 Set $z=\left (z',z''\right)$ with $z'=\left (z_1,\cdots, z_{n-k}\right)$, $z''=\left (z_{n-k+1},\cdots,z_n\right)$ then the
 plurisubharmonic function
$$u_k\left (z\right):=||z'||^{2-\frac{2k}{n}}\left (1+||z''||^2\right)$$
has Monge-Amp\`ere density equal to 
$$\left (\frac{n-k}{n}\right)^2\left (1+||z''||^2\right)^{n-k-1},$$
which is strictly positive, but the minimum set contains the $k$-dimensional subspace $z'=0$.
\end{example}

\begin{remark}
 The result shows that if $u\in\mathcal C^{1,\alpha}$ for $\alpha>1-\frac2n$ then, in fact, the minimum set cannot contain analytic subsets
 of positive dimension. This is the complex analogue of a real result of Urbas \cite{Ur88} stating that convex solutions with regularity
 slightly better than in Pogorelov examples must be strictly convex.
\end{remark}

Our final result rules out analytic sets of suitable dimension on which plurisubharmonic functions are pluriharmonic. In fact, by a simple observation we show that this setting is not different than the one for minimum sets.
\begin{theorem}\label{plurih}
  Let $u$ be a plurisubharmonic function  satisfying $\left (dd^c u\right)^n\geq 1$. If additionally $u\in \mathcal C^{1,\alpha}$ for
 $\alpha>1-\frac{2k}n$ if $2k\leq n$ or $u\in \mathcal C^{0,\beta}$ for $\beta> 2-\frac{2k}n$ if $2k>n$, then for any
 analytic set $A$ of dimension greater than or equal to $k$, the function $u$ restricted to $A$ cannot be pluriharmonic.
\end{theorem}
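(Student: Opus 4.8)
The plan is to reduce the assertion to Theorem \ref{thm1}. Suppose, to the contrary, that $u$ restricted to $A$ is pluriharmonic, and work in a neighbourhood of a point $x_0$ in the regular part of $A$. The underlying observation is that a pluriharmonic summand alters neither $\left(dd^c u\right)^n$ nor the H\"older class of $u$, and that the barrier argument proving Theorem \ref{thm1} uses the hypothesis ``$u\ge 0$ and $u$ vanishes on $A$'' only through two facts: that $u$ vanishes on $A$ \emph{together with its gradient} --- this is precisely what lets one pass from the H\"older estimate to $\tilde u\left(z',z''\right)\le A\|z'\|^{2}+A^{-\gamma}C_0$ --- and that the comparison point $\left(0',\rho/2,\dots,\rho/2\right)$ lies on $A$ (so that $\tilde u$ vanishes there); finiteness of $\sup_U\tilde u$ only needs continuity. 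Hence it is enough to construct, near $x_0$, a plurisubharmonic function $v$ with $\left(dd^c v\right)^n\ge c>0$, lying in the same H\"older class as $u$, and vanishing on $A$ together with its gradient.

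To produce $v$ I would straighten $A$ exactly as in the proof of Theorem \ref{thm1}: a biholomorphism $\pi\colon U\to V$ with $\pi^{-1}\left(A\cap V\right)=\{z_1=\dots=z_{n-k}=0\}$ and unit Jacobian at $0=\pi^{-1}\left(x_0\right)$ turns $u$ into $\tilde u_0:=u\circ\pi$, for which $\left(dd^c\tilde u_0\right)^n\ge 1/2$ on a smaller $U$. Since $z''\mapsto\pi\left(0,z''\right)$ holomorphically parametrizes $A$, the function $z''\mapsto\tilde u_0\left(0,z''\right)$ is pluriharmonic, say equal to $\operatorname{Re} g\left(z''\right)$. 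I would then take the holomorphic extension $\tilde g\left(z',z''\right):=g\left(z''\right)+\sum_{i=1}^{n-k}z'_i\,\psi_i\left(z''\right)$, where $\psi_i\left(z''\right):=2\bigl(\partial\tilde u_0/\partial z'_i\bigr)\left(0,z''\right)$, and put $v:=\tilde u_0-\operatorname{Re}\tilde g$. This is legitimate because $z''\mapsto\bigl(\partial\tilde u_0/\partial z'_i\bigr)\left(0,z''\right)$ is holomorphic: the plurisubharmonicity of $\tilde u_0$ makes its Levi form a positive $\left(1,1\right)$-current whose $z''\!\times\!z''$ block, restricted to $\{z'=0\}$, equals the vanishing Levi form of the pluriharmonic function $\tilde u_0\left(0,\cdot\right)$, and a positive Hermitian form with a vanishing diagonal block has the adjacent off-diagonal entries vanish as well, so the mixed entries coupling the $z'$- and $z''$-directions vanish along $\{z'=0\}$. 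By construction $v$ is plurisubharmonic, satisfies $\left(dd^c v\right)^n=\left(dd^c\tilde u_0\right)^n\ge 1/2$ (since $dd^c\operatorname{Re}\tilde g=0$), lies in the same H\"older class as $u$ (since $\tilde g$ is smooth), and vanishes on $\{z'=0\}$ together with its gradient.

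Now I would repeat the barrier construction of Theorem \ref{thm1} with $v$ in the role of $\tilde u$: the same polydisc $W$, the same barrier $w\left(z\right)=A\|z'\|^{2}+A^{-\gamma}C_0+\sum_{j=n-k+1}^{n}\frac{\varepsilon}{\rho}\left(n\rho-\operatorname{Re} z_j\right)+B\sum_{j=n-k+1}^{n}\left(|z_j|^{2}-\rho\operatorname{Re} z_j\right)$, with $A$, $B$, $\varepsilon$ chosen as there. One gets $w\ge v$ on $\{\|z'\|=\rho\}$ as soon as $A\ge\sup_U v+k/4$ (finite, by continuity of $v$ on $\overline U$), and $w\ge v$ on the faces $\{|z_j|=\rho\}$ from the estimate $v\left(z',z''\right)\le M\|z'\|^{1+\alpha}$, valid because $v$ and $\nabla v$ vanish on $\{z'=0\}$ --- in the $\mathcal C^{0,\beta}$ case one replaces $1+\alpha$ by $\beta$ and invokes plain H\"older continuity, so there the matching of gradients is not even needed. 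Since $\left(dd^c w\right)^n=A^{n-k}B^{k}\le 1/2\le\left(dd^c v\right)^n$, the comparison principle gives $w\ge v$ on all of $W$. Finally $\left(0',\rho/2,\dots,\rho/2\right)$ lies on $\{z'=0\}=\pi^{-1}\left(A\cap V\right)$, so $v$ vanishes there, and
$$0=v\left(0',\tfrac{\rho}{2},\dots,\tfrac{\rho}{2}\right)\le w\left(0',\tfrac{\rho}{2},\dots,\tfrac{\rho}{2}\right)=A^{-\gamma}C_0+k\Bigl(n-\tfrac12\Bigr)\varepsilon-kB\tfrac{\rho^{2}}{4};$$
the computation that ends the proof of Theorem \ref{thm1} then forces $\tfrac{n-k}{k}\ge\gamma$, i.e.\ $\alpha\le 1-\tfrac{2k}{n}$, a contradiction.

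I expect the genuinely delicate step to be the holomorphic matching of the first order normal data in the second paragraph: the linear-algebra fact that a positive Hermitian form with a zero diagonal block has a zero off-diagonal block is pointwise, whereas here $\tilde u_0$ is merely $\mathcal C^{1,\alpha}$, so its Levi form is a positive current and the vanishing ``along $\{z'=0\}$'' has to be read distributionally. I would secure it by slicing --- letting $z'_0\to 0$ in the subharmonic functions $\tilde u_0\left(z'_0,\cdot\right)$, which converge uniformly to the pluriharmonic $\tilde u_0\left(0,\cdot\right)$ --- combined with the continuity of $\nabla\tilde u_0$, or else by a careful smoothing argument. Granting this, everything else is literally the argument of Theorem \ref{thm1}, which is what the remark preceding the statement means by saying that, once $u|_A$ is normalized to zero, the pluriharmonic setting is no different from the minimum-set one.
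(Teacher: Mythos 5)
Your reduction to Theorem \ref{thm1} is correct in outline, but the route is genuinely different from the paper's. The paper does not subtract a pluriharmonic jet: after straightening $A$ to $\lbrace z'=0\rbrace$ it replaces $u$ by its torus symmetrization
$$\tilde u\left(z\right)=\frac{1}{\left(2\pi\right)^n}\int_0^{2\pi}\cdots\int_0^{2\pi}u\left(z_1e^{i\theta_1},\dots,z_ne^{i\theta_n}\right)d\theta_1\cdots d\theta_n .$$
Pluriharmonicity of $u|_A$ and the mean value property give $\tilde u\left(0',z''\right)=u\left(0\right)$, plurisubharmonicity gives $\tilde u\geq u\left(0\right)$ everywhere, so $\tilde u-u\left(0\right)$ is a nonnegative plurisubharmonic function of the same H\"older class vanishing on $A$; the lower bound $\left(dd^c\tilde u\right)^n\geq1$ is preserved because $\tilde u$ is a limit of convex combinations of rotated copies of $u$ and one invokes the inequality for mixed Monge--Amp\`ere measures from \cite{Di09}. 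Theorem \ref{thm1} then applies verbatim. This buys exactly the two things your construction has to work for: the gradient of the new function vanishes along $A$ automatically (interior minimum of a $\mathcal C^1$ function), and no holomorphic first-order data along $A$ has to be manufactured. The price is the appeal to the mixed Monge--Amp\`ere inequality, which your subtraction avoids since $dd^c\operatorname{Re}\tilde g=0$ leaves $\left(dd^cv\right)^n$ literally unchanged.

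The crux of your version is, as you say, the holomorphy of $\psi_i\left(z''\right)=2\left(\partial\tilde u_0/\partial z_i'\right)\left(0,z''\right)$, and this is a real gap as written: for a $\mathcal C^{1,\alpha}$ function the Levi form is only a matrix of measures, $\lbrace z'=0\rbrace$ is Lebesgue-null, and the hypothesis controls the Hessian of the restriction, not the restriction of the Hessian, so the pointwise linear algebra does not apply directly. Your proposed fix can be completed: integrating against test functions supported in tubes $\lbrace \|z'\|<\delta\rbrace$, the uniform convergence of the subharmonic slices $\tilde u_0\left(z_0',\cdot\right)$ to the pluriharmonic $\tilde u_0\left(0,\cdot\right)$ makes the tube averages of the tangential diagonal entries $\partial^2\tilde u_0/\partial z_j''\partial\bar z_j''$ tend to zero; Cauchy--Schwarz for the positive Hermitian matrix of measures then kills the averaged mixed entries, and continuity of $\nabla\tilde u_0$ identifies the limit of these averages with $\partial_{\bar z_j''}\psi_i$ in the sense of distributions, so $\psi_i$ is continuous and weakly, hence genuinely, holomorphic. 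Two further points to record: in the range $2k>n$ the function is only $\mathcal C^{0,\beta}$ and $\psi_i$ need not exist, so there the correction must degenerate to subtracting $\operatorname{Re}g\left(z''\right)$ alone, which, as you note, suffices because only zeroth-order matching is used; and once $\psi_i$ is known to be holomorphic it is smooth on a smaller polydisc, so $v$ indeed stays in the H\"older class of $u$. With the holomorphy lemma written out your argument is complete; the paper's symmetrization is simply the cleaner way to reach the same reduction.
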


\begin{remark}
Example \ref{generalizedpogorelov} clearly shows that the regularity assumptions are sharp.

\begin{proof}
 We shall once again follow the argument of Urbas \cite{Ur88} but with a twist. Arguing just like in the proof of Theorem \ref{thm1}, we
 can assume that the analytic set is given locally around the coordinate origin by 
$$\lbr z\ | z_1=0,\cdots,z_{n-k}=0\rbr.$$

Fix a small enough radius $\rho>0$ such that everyting is compactly supported in the domain of definition of $u$. Define the  symmetrization function $\tilde{u}$ by
$$\tilde{u}\left (z\right):=\frac{1}{\left (2\pi\right)^n}\int_0^{2\pi}\cdots\int_{0}^{2\pi}u\left (z_1e^{i\theta_1},\cdots,z_ne^{i\theta_n}\right)d\theta_1\cdots d\theta_n.$$
By definition $\tiu$ is plurisubharmonic in a neighborhood of the origin.
Note that by pluriharmonicity assumption we have 
\begin{equation}\label{abc}
\tiu\left (z',0''\right)=u\left (0',0''\right),
\end{equation}
while by the plurisubharmonicity of $u$ we get
\begin{equation}\label{def}
 \tiu\left (z',z''\right)\geq u\left (0',0''\right).
\end{equation}
Thus, adding a constant if necessary, we can assume that $\tiu\geq 0$ and $\tiu\left (0\right)=0$. Of course $\tiu$ is at least as regular as $u$. Note finally that
$$\left (dd^c\tiu\right)^n\geq1,$$
since $\tiu$ is a convex combination of $z\rightarrow u\left (ze^{i\theta}\right)$ and each such function has Monge-Amp\`ere density at least one  (strictly speaking one has to pass through discretization of the integrals and to apply the inequality for mixed Monge-Amp\`ere measures for the mixed terms from \cite{Di09} to get this inequality).

Finally an application of Theorem \ref{thm1} for the function $\tiu$ yields a contradiction.

\end{proof}
 
\end{remark}

\medskip
{\bf Acknowledgments}
The first named author was partially supported by NCN grant no 2013/11/D/ST1/02599. The second named auhor was partially supported by NCN grant no 2013/08/A/ST1/00312.

\end{document}